\def\bd{\begin{description}}
\def\ed{\end{description}}
\def\beq{\begin{equation}}
\def\eeq{\end{equation}}
\def\bea{\begin{eqnarray}}
\def\eea{\end{eqnarray}}
\def\beas{\begin{eqnarray*}}
\def\eeas{\end{eqnarray*}}
\newtheorem{definition}{Definition}
\newtheorem{lemma}{Lemma}
\newtheorem{theorem}{Theorem}
\newtheorem{corollary}{Corollary}
\begin{document}


\title{\vspace{-2.6cm}Safe global optimization of expensive noisy black-box functions in the $\delta$-Lipschitz framework} 

\author{Yaroslav D. Sergeyev\thanks{Corresponding author: \texttt{yaro@dimes.unical.it}} \\[2pt]{\small University of Calabria,
Rende, Italy and Lobachevsky University, Nizhni Novgorod, Russia}
\and Antonio Candelieri \\[2pt]{\small University of Milano-Bicocca, Milan,
Italy} \and Dmitri~E.~Kvasov \\[2pt]{\small University of Calabria, Rende,
Italy and  Lobachevsky University, Nizhni Novgorod, Russia} \and
Riccardo Perego \\[2pt]{\small University of Milano-Bicocca, Milan, Italy}
}

\date{}

\maketitle

\begin{abstract}
In this paper, the problem of safe global maximization (it should
not be confused with robust optimization) of expensive noisy
black-box functions satisfying the Lipschitz condition is
considered. The notion ``safe" means that the objective function
$f(x)$ during optimization should not violate a ``safety" threshold,
for instance, a certain a priori given value $h$ in a maximization
problem. Thus, any new function evaluation (possibly corrupted by
noise) must be performed at ``safe points" only, namely, at points
$y$ for which it is known that the objective function $f(y) > h$.
The main difficulty here consists in the fact that the used
optimization algorithm should ensure that the safety constraint will
be satisfied at a point $y$ {\emph{before}} evaluation of $f(y)$
will be executed. Thus, it is required both to determine the safe
region $\Omega$ within the search domain~$D$ and to find the global
maximum within $\Omega$. An additional difficulty consists in the
fact that these problems should be solved in the presence of the
noise. This paper starts with a theoretical study of the problem and
it is shown that even though the objective function $f(x)$ satisfies
the Lipschitz condition, traditional Lipschitz minorants and
majorants cannot be used due to the presence of the noise. Then, a
$\delta$-Lipschitz framework and two algorithms using it are
proposed to solve the safe global maximization problem. The first
method determines the safe area within the search domain and the
second one executes the global maximization over the found safe
region. For both methods a number of theoretical results related to
their functioning and convergence is established. Finally, numerical
experiments confirming the reliability of the proposed procedures
are performed.

\vspace*{2mm} \textbf{Keywords:} {Safe global optimization;
expensive black-box functions; noise; Lipschitz condition; Machine
learning.}

\end{abstract}


\section{Introduction}
\label{intro} Many complex industrial applications   are
characterized by black-box, multiextremal, and expensive objective
functions that should be optimized (see
\cite{Ref5,Daponte:et:al.(1996),Ref1,Kvasov&Sergeyev(2015),PSKZ(2014),Sergeyev:et:al.(1999),book_Sergeyev&Kvasov(2017),Ref2,Ref3}).
The word \emph{expensive} means here that each evaluation of the
objective function $f(x)$ is a time consuming operation. Since
locally optimal solutions often do not give a sufficiently high
level of improvement w.r.t. a currently available solution obtained
by engineers using practical reasons, global optimization problems
are considered (see
\cite{Cavoretto:et:al.(2019),Ref5,Floudas&Pardalos(1996),Gergel2015,Horst&Pardalos(1995),Ref1,Pinter(1996),book_Sergeyev&Kvasov(2017),Strongin&Sergeyev(2000),Zilinskas2010IntervalAB,Vanderbei(1999),Zilinskas&Zhigljavsky(2016)}).
Unfortunately, a practical global optimization process is often
performed under a limited budget, i.e., the number of allowed
evaluations of $f(x)$ is fixed a priori and is not very high (see a
detailed discussion in \cite{Ref2}) requiring so an accurate
development of fast global optimization methods (see, e.\,g.,
\cite{Barkalov&Strongin(2018),Barkalov&Gergel(2016),Grishagin_Israfilov_Sergeyev(2018),Lera&Sergeyev2010b,PSKZ(2014),Paulavicius:et:al.(2020),book_Sergeyev&Kvasov(2017),Strongin&Sergeyev(2000),Zilinskas&Zhigljavsky(2016)}).

Recently, a class of  real applications having an additional
important constraint on the value of the objective function is under
an accurate study (see, e.\,g., \cite{Ref7,Ref6,Ref8} in the context
of Lipschitz optimization, control problems, reinforcement learning,
Bayesian optimization, etc.). This constraint requires that the
objective function $f(x)$ during optimization should not violate a
``safety'' threshold (that should not be confounded with robust
optimization, see, e.\,g., \cite{BenTal:et:al.(2009)}), for
instance, a certain a priori given value $h$ in a maximization
problem. Thus, any new function evaluation must be performed at
``safe points'' only, namely, at points $y$ for which it is known
that the objective function value will not violate the safety
threshold $h$, i.e., it should be $f(y)>h$. The main difficulty here
consists in the fact that the used optimization algorithm should
ensure that the safety constraint will be satisfied at a point $y$
\textbf{\emph{before}} evaluation of $f(y)$ will be executed.

This requirement is very important and represents a key difference
w.r.t. traditional constraint problems where constraints can be
checked and the algorithm then behaves in dependence on whether a
constraint was satisfied or not. In this kind of problems, it is
often allowed to evaluate the objective function $f(x)$ not only at
admissible but also at inadmissible points. In contrast, in the safe
optimization problems evaluation of $f(x)$ at inadmissible points is
strictly prohibited. Thus, while formulations using global
optimization with unknown constraints is well suited for
simulation-optimization problems, the safe global optimization
formulation is more appropriate for online control and online
learning and optimization problems (see, e.\,g.,
\cite{Ref7,Ref6,Ref8}).

For instance, let us consider, as an example of the safe
optimization problem, tuning parameters of a controller used for
automatic process control in the manufacturing industry, especially
in the pneumatic, electronic, robotic and automotive domains (see,
e.\,g., \cite{fiducioso2019safe,schillinger2017safe}). One of the
most widely used types of controllers is known as PID, according to
the three components: Proportional, Integral, and Derivative. A PID
controller implements a control loop mechanism based on feedback:
depending on the error value, computed continuously as the
difference between a desired setpoint and a measured process
variable, the PID controller applies a correction based on its
proportional, integral, and derivative terms.  Although a software
model of the system to control can be used to initially design the
PID controller, an expensive and prone-to-failures manual tuning
phase is needed to set up, on the real-life system, the optimal
values of its parameters from where the need to perform this phase
``safely'' arises. Depending on the characteristics of the system to
control, the safety of the controller is evaluated with respect to
one or more of the following issues: responsiveness of the
controller, overshooting of the desired setpoint by the measured
process variable as well as oscillatory behaviour around the
setpoint. In particular, overshooting and oscillatory behaviour
could lead to malfunctioning or breakage of the system to control.
Thus, searching for the optimal tuning of the PID parameters on a
real system, without considering ``safety'' of the system itself,
could damage it, in some cases irreparably.

In some sense, it is required both to determine the safe region
$\Omega$ within the search domain $D$ and to find the global optimum
within the safe region $\Omega$. Usually, at least one safe point is
known before the start of the optimization process (e.\,g., it is
taken using the current working configuration of the optimized
industrial system). Thus, it is required to invent a ``safe
expansion'' mechanism to extend the current  safe region from the
starting point in order to find the whole $\Omega$. It can happen
that, if the safe region $\Omega$ consists of several disjoint
subregions, those subregions which do not contain initially provided
safety points will be never found. In this case, it is not possible
to talk about the global optimum over the whole safe region $\Omega$
and a global optimum over the current safe subregion should be
found.

A further complication that is frequently present  in applied
optimization problems (see
\cite{Ref5,Floudas&Pardalos(1996),Ref1,Molinaro(2001),Pinter(1996),Strongin&Sergeyev(2000),Vanderbei(1999),Zilinskas&Zhigljavsky(2016)})
is the presence of noise affecting evaluations of the objective
function $f(x)$. These evaluations should remain safe even when they
are corrupted by noise. It is difficult to underestimate the
importance  of taking into consideration the presence of noise since
it can make invalid many assumptions (e.\,g., convexity,
derivability, Lipschitz continuity, etc.) usually done w.r.t.
optimized functions.  In spite of its crucial impact, noise is often
not considered in detail in safe global optimization problems
whereas  the afore mentioned approaches \cite{Ref7,Ref6,Ref8} do it
and provide some probability-based considerations on safety.
Precisely these papers have stimulated us to study safe global
optimization problems with noise.

Since Lipschitz continuity is a quite natural assumption for applied
problems (specifically for technical systems, see, e.\,g.,
\cite{Gillard&Kvasov(2016),Horst&Pardalos(1995),Kvasov&Sergeyev(2013),Pinter(1996),book_Sergeyev&Kvasov(2017),Strongin&Sergeyev(2000)}),
we consider here objective functions that satisfy the Lipschitz
condition over the search domain~$D$. Thus, our problem becomes
Lipschitz global optimization problem broadly studied in the
literature (see, e.\,g.,
\cite{Barkalov&Strongin(2018),Gergel:et:al.(2015L),Gillard&Kvasov(2016),Grishagin_Israfilov_Sergeyev(2018),Ref1,Kvasov&Sergeyev(2013),Pinter(1996),Sergeyev_Grishagin(2001),book_Sergeyev&Kvasov(2017),Strongin&Sergeyev(2000)}
and references given therein). In this paper,  we propose a new
Lipschitz-based safe global optimization algorithm, specifically
designed to work in the noisy setting. It is proved that, in spite
of the presence of the noise, our approach does not permit any
violation of the safety threshold. The only assumption made with
respect to the noise is its boundedness, with a maximal level of the
noise known a priori.

The remaining part of the manuscript is structured as follows.
Section~\ref{sec:2} contains   statement of the problem and its
analysis. Section~\ref{expan} presents a theoretical investigation
of a reliable expansion of the safe region and proposes an algorithm
realizing this expansion. Section~\ref{maxim} introduces a global
maximization algorithm working in the presence of the noise over the
found safe region. Section~\ref{sec:sec5} proposes three series of
numerical experiments confirming theoretical results and  showing a
reliable performance of the two introduced methods.
Section~\ref{sec:conclusions} concludes the paper.

\section{Statement of the problem and its analysis}
\label{sec:2} In order to start, let us present a general
formulation of the safe global optimization problem using the
Lipschitz framework in one dimension. As was already mentioned,
Lipschitz global optimization problems can be very often encountered
in applications even when $f(x)$ is univariate. Nowadays problems of
this kind with and without noise are under an intensive study (see,
e.\,g.,
\cite{Calvin&Zilinskas(2000),Calvin&Zilinskas(2005),Calvin:et:al.(2012),Casado_SIAM,Daponte:et:al.(1996),Kvasov&Mukhametzhanov(2018),Numta2019,Kvasov&Sergeyev(2012),Kvasov&Sergeyev(2015),Lera&Sergeyev(2013),Molinaro(2001),Pinter(1996),Sergeyev(1995),Sergeyev:et:al.(1999),Sergeyev:et:al(2001),Sergeyev:et:al.(2020),book_Sergeyev&Kvasov(2017),Sergeyev:et:al.(2017b),homogeneity,Sergeyev:et:al.(2016a)}).

To state the problem formally, let us suppose that a function $f(x)$
satisfies over a search domain $D=[a,b]$ the Lipschitz condition
 \beq
  |f(x_1) - f(x_2) | \le  L |x_1 - x_2 |, \hspace{5mm}
   0 < L < \infty,\hspace{5mm} x_1, x_2 \in   D,
\label{Lip}
 \eeq
with an a priori known Lipschitz constant
$L$\footnote{\label{note_L}Clearly, there can exist several
constants $\tilde{L}$ such that if they are placed in (\ref{Lip})
instead of $L$ the inequality will hold. Without loss of generality
we suppose hereinafter that $L>\min \tilde{L}$.}. Then, given a
safety threshold $h
>0$, it is required, in the presence of
noise ${\xi(x)}$,   to find an approximation of the point~$x^*$ and
an estimate of the corresponding value $g(x^*)$ such that
\begin{equation}
x^* = \underset{x \in \Omega \subseteq D}{\text{argmax}}
\hspace{3mm} g(x),  \hspace{1cm}g(x)=f(x)+\xi(x),
 \label{problem}
\end{equation}
where $\Omega$ is the safe region that can consist of several
disjoint subregions $\Omega_j, 1 \le j \le m,$ and the noise
${\xi(x)}$ is bounded by a known value $\delta$, i.e.,
\begin{equation}
|\xi(x)|\leq \delta,  \hspace{1cm} \delta > 0,
 \label{noise}
 \eeq
 \beq
 \Omega  = \{x: x \in D, g(x) \ge h \}, \hspace{5mm}
 \Omega=\cup_{j=1}^{m}\Omega_j, \hspace{1.5mm} \Omega_i\cap\Omega_j=\emptyset, \hspace{1mm} i\neq j.
 \label{omega}
\end{equation}
Thus, at each point $x$ the value $g(x)$ can belong to the set
 \beq
 G(x)= \{ y: y=f(x)+\xi(x), \xi(x) \in [-\delta,\delta] \}.
  \label{G(x)}
 \eeq

It should be stressed that in traditional noisy optimization
problems (see, e.\,g., \cite{Calvin&Zilinskas(2005)}) the maximizer
of $f(x)$, not of $g(x)$, is of interest. This is not the case in
safe optimization since the function $g(x)$ is measured and not
$f(x)$ and, therefore, it is important that precisely $g(x)$ would
be as larger than the threshold $h$ as possible.

\begin{figure}[t]
  \centering
  \includegraphics[width=1.0\linewidth]{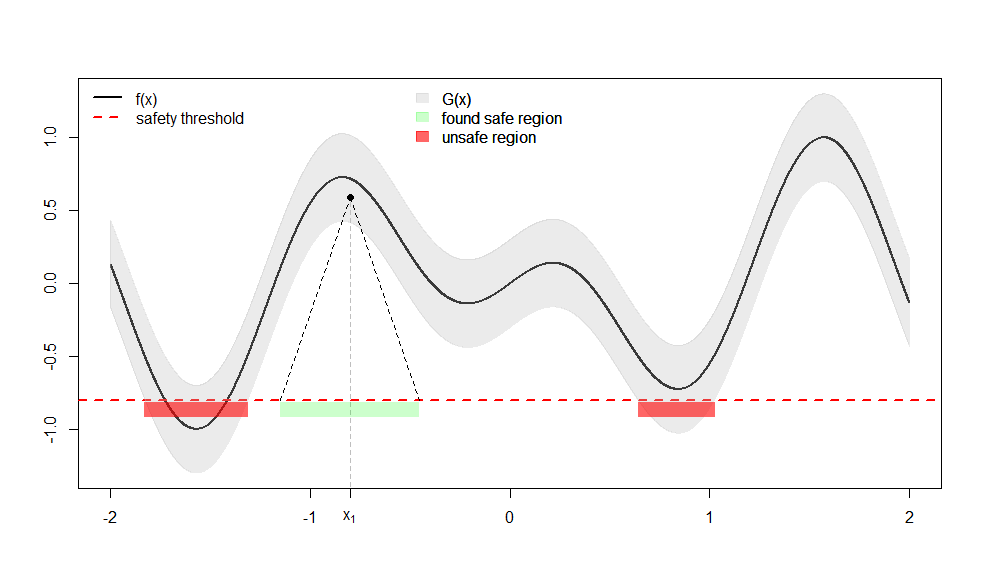}
\caption{An initial known safe point $x_1$, the initial safe region
(in green) found using  the function $\psi(x,x_1)$, and two unsafe
subregions (in red)}
  \label{fig:example_1}
\end{figure}

\begin{figure}[t]
  \centering
  \includegraphics[width=1.0\linewidth]{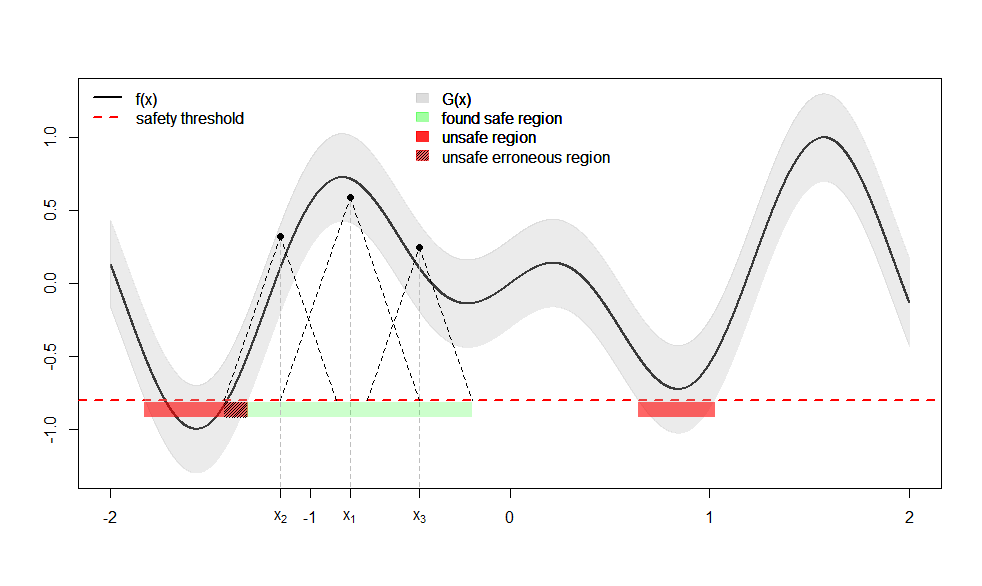}
\caption{An example of an error in determining the safe region using
functions $\psi(x,x_1), \psi(x,x_2),$ and $\psi(x,x_3)$ occurring
due to an inaccurate consideration of the noise}
  \label{fig:unsafe_expansion_example}
\end{figure}

In order to illustrate the problem (\ref{Lip})--(\ref{G(x)}), let us
consider Fig.~\ref{fig:example_1}. It shows the Lipschitz function
$f(x)=sin(x)\cdot cos(4x)$  having $L=4$ and defined over
$D=[-2,2]$. The safety threshold is $h=-0.8$ and the initial safe
point is $x_1=-0.8$. It can be seen that $f(x_1)\neq g(x_1) \in
G(x_1)$, in particular $g(x_1) < f(x_1)$.  The unsafe region is
shown in red; it consists of two subregions. Notice that in the
right-hand unsafe subregion $f(x)>h$ but due to the presence of the
noise this subregion becomes unsafe. The safe region, $Q$, consists
of three subregions $Q_1, Q_2,$ and $Q_3$. The presence of the safe
point $x_1$ in the central subregion $Q_2$ and the knowledge of the
Lipschitz constant $L$ together with supposition $g(x_1) < f(x_1)$
allow us to build the following Lipschitz minorant
 \[
 \psi(x,x_1) = g(x_1)-L|x_1-x|, \hspace{1cm} x \in D,
 \]
and to determine the initial safety region shown in green. It should
be expanded during optimization using the Lipschitz information,
however, since we have only one initial safe point at $Q_2$, only
this safe subregion   can be explored and the left-hand and
right-hand safe subdomains $Q_1$ and $Q_3$   will remain
undiscovered.

A  strategy for expanding the initial safe region   is illustrated
in Fig.~\ref{fig:unsafe_expansion_example}.  Clearly, the maximal
possible expansion of the currently found  safe region can be
obtained by evaluating the objective function at the points $x_2$
and $x_3$ being its extrema. Unfortunately, if the noise is not
taken into account appropriately, there is a risk to overestimate
the safe region. In real-life applications, at each safe point $y$
we do not know whether $g(y) \le f(y)$ or  $g(y) > f(y)$. For
instance, it can be seen in Fig.~\ref{fig:unsafe_expansion_example}
that in this example both $g(x_2)
> f(x_2)$ and $g(x_3) > f(x_3)$. Therefore,   functions
$\psi(x,x_2)$ and $\psi(x,x_3)$ constructed using the Lipschitz
constant $L$ of $f(x)$ at the points $x_2$ and $x_3$ are not
minorants for $g(x)$ anymore. In particular, this fact results in an
error in determining the safe region starting from the point $x_2$.
It can be seen that the region shown in
Fig.~\ref{fig:unsafe_expansion_example} in hatched red is unsafe but
it is considered by the described procedure of the expansion to be
safe. The source of this error is the fact that even though the
objective function $f(x)$ satisfies the Lipschitz condition
(\ref{Lip}), the function $g(x)$ is not Lipschitzian as it is shown
in Theorem~\ref{T1} below. To prove it, we need the following
definition (notice that similar but slightly different functions
have been considered in\,\cite{Vanderbei(1999)}).

\begin{definition}
A function $s(x)$ is called $\delta$-Lipschitz   over the interval
$D$ if it   satisfies the following condition
 \beq
  |s(x_1) - s(x_2) | \le  L |x_1 - x_2 | +\delta, \hspace{5mm}  0 < L < \infty,\,\, 0 < \delta    < \infty, \hspace{5mm} x_1, x_2 \in
  D.
\label{Lipdelta}
 \eeq
\end{definition}

\begin{theorem}\label{T1}
Suppose that the function $f(x)$ satisfies the Lipschitz condition
(\ref{Lip}) with a constant $L$, then the following two assertions
hold:

i. The function $g(x)$ from  (\ref{problem}) does not satisfy the
Lipschitz condition.

ii.  The function $g(x)$ from  (\ref{problem})  is
$2\delta$-Lipschitzian   where $\delta$ is from (\ref{noise}).
\end{theorem}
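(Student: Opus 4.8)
The plan is to treat the two assertions separately: assertion~ii is a direct consequence of the triangle inequality, while assertion~i is proved by contradiction, exploiting the fact that the noise term $\xi(x)$ may oscillate by as much as $2\delta$ between arbitrarily close points whereas a Lipschitz function cannot.

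For assertion~ii, I would write, for arbitrary $x_1,x_2\in D$,
\[
 |g(x_1)-g(x_2)| = \bigl|(f(x_1)-f(x_2))+(\xi(x_1)-\xi(x_2))\bigr| \le |f(x_1)-f(x_2)| + |\xi(x_1)| + |\xi(x_2)|,
\]
and then bound the first summand by $L|x_1-x_2|$ using (\ref{Lip}) and each of the remaining two by $\delta$ using (\ref{noise}). This yields $|g(x_1)-g(x_2)| \le L|x_1-x_2| + 2\delta$, which is precisely the $\delta$-Lipschitz condition (\ref{Lipdelta}) with the same $L$ and with $2\delta$ playing the role of $\delta$; hence $g$ is $2\delta$-Lipschitzian.

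For assertion~i, suppose, for contradiction, that $g$ were Lipschitz on $D$ with some finite constant $M$. Fix $x_1\in D$ and pick $x_2\in D$, $x_2\neq x_1$, with $|x_1-x_2|\le \delta/L$; choose an admissible noise realization with $\xi(x_1)=\delta$ and $\xi(x_2)=-\delta$, which respects (\ref{noise}). Then, using (\ref{Lip}),
\[
 |g(x_1)-g(x_2)| = \bigl|(f(x_1)-f(x_2))+2\delta\bigr| \ge 2\delta - |f(x_1)-f(x_2)| \ge 2\delta - L|x_1-x_2| \ge \delta,
\]
so that $|g(x_1)-g(x_2)|/|x_1-x_2| \ge \delta/|x_1-x_2|$, and letting $x_2\to x_1$ makes this ratio arbitrarily large, contradicting $|g(x_1)-g(x_2)|\le M|x_1-x_2|$. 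Therefore no finite Lipschitz constant exists for $g$, which proves assertion~i.

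The only genuinely delicate point is the logical structure of assertion~i: one must establish that \emph{no} finite constant works, not merely that the particular constant $L$ fails, so the argument must be phrased as ``for every candidate $M$ there is a pair of points and an admissible noise realization violating the bound''; everything else is routine. I would also note in passing that the same lower bound $2\delta-L|x_1-x_2|\to 2\delta$ as $x_2\to x_1$ shows that the constant $2\delta$ in assertion~ii cannot in general be reduced, so it is sharp.
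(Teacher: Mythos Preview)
Your proof is correct and follows essentially the same line as the paper's. For assertion~ii the two arguments are identical (triangle inequality plus the bounds on $f$ and $\xi$). For assertion~i the paper likewise argues that $|g(x_1)-g(x_2)|$ need not tend to zero as $x_1\to x_2$, but your version is tighter on two points the paper glosses over: you contradict an \emph{arbitrary} candidate constant $M$ rather than only the specific $L$ of~(\ref{Lip}), and you explicitly exhibit an admissible noise realization ($\xi(x_1)=\delta$, $\xi(x_2)=-\delta$) that forces the lower bound, whereas the paper infers the nonzero limit by invoking the \emph{upper} bound~(\ref{2delta}), which by itself yields only $\limsup\le 2\delta$, not $\liminf\ge 2\delta$. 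Your closing remark that the same construction shows the constant $2\delta$ in assertion~ii is sharp is correct and is not in the paper.
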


\begin{proof} Let us first prove the second assertion.  It follows
from definition of $g(x)$ in~(\ref{problem}), from  the Lipschitz
condition (\ref{Lip}),  and from the boundedness of noise
in~(\ref{noise}) that
 \[
  |g(x_1) - g(x_2) | = |f(x_1)+\xi(x_1)  - f(x_2)-\xi(x_2) | \le
  \]
 \beq
|f(x_1)   - f(x_2)  | + | \xi(x_1) | + |\xi(x_2) | \le L |x_1 - x_2
| +2\delta. \label{2delta}
   \eeq

Let us now consider the assertion (\emph{i}). If inequality
 \beq
  |g(x_1) - g(x_2) | \le  L |x_1 - x_2 |, \hspace{5mm}  0 < L < \infty,\hspace{5mm} x_1, x_2 \in
  D,
\label{Lipnot}
 \eeq
was satisfied (where $L$ is from (\ref{Lip})), then it would be that
$\lim_{x_1 \rightarrow x_2} |g(x_1) - g(x_2)| =0$. However, due to
(\ref{2delta}), $\lim_{x_1 \rightarrow x_2} |g(x_1) - g(x_2)|
=2\delta$ and, therefore, $g(x)$ does not satisfy (\ref{Lipnot}).
\end{proof}

\section{A reliable expansion of the  safe
region} \label{expan}

In order to provide a reliable expansion from the initial safe
region, it is necessary to ensure that at a new point $z$ chosen to
evaluate $g(x)$   condition (\ref{omega}) holds, i.e., $g(z) \ge h$.
Moreover, this condition should be  satisfied for any value of noise
from (\ref{noise}) and, as a consequence, for any value of $g(x) \in
G(x)$ from (\ref{G(x)}). Let us start to introduce such a mechanism
by supposing that $g(x)$ has been evaluated at several safe points
$x_i \in \Omega, 1 \le i \le k,$ and introduce the following
functions
 \beq
 \varphi(x,x_i) = g(x_i)-L|x_i-x|-2\delta, \hspace{1cm} x \in D,
 \label{fi}
   \eeq
where $\delta$ is from (\ref{noise}). Then, the following result
holds.

\begin{lemma}\label{L1}
Function $\varphi(x,x_i)$ satisfies Lipschitz condition (\ref{Lip}).
\end{lemma}

\begin{proof} To prove Lemma \ref{L1} it is required to show that
 \[
 | \varphi(x',x_i) - \varphi(x'',x_i)| \le L | x' - x'' |, \hspace{1cm} x', x'' \in
 D.
 \]
 For any arbitrary $x', x'' \in
 D$ it follows that
  \[
 | \varphi(x',x_i) - \varphi(x'',x_i)| = | g(x_i)-L|x_i-x'|-2\delta -g(x_i)+L|x_i-x''|+2\delta
 | =
 \]
   \[
   |\; ( L|x_i-x''|  -L|x_i-x'| ) \; | =  L\; |\;( |x_i-x'' -x'+x'|  -
   |x_i-x'|)\; | \le
 \]
   \[
   L \;| \;(\; |x'-x'' | +  |x_i-x' | - |x_i-x'|\; )\; |  =  L | x' - x'' |.
 \]
Thus, Lemma has been proven. \end{proof}

\begin{theorem}\label{T2}
Let us construct the following function
 \beq
 \Phi_k(x)= \max_{1 \le i \le k} \varphi(x,x_i), \hspace{1cm}  x \in D,
\label{Fi}
 \eeq
then for any value of noise $\xi(x) \in [-\delta,\delta]$ it follows
that $\Phi_k(x)$ is a minorant for   the function $g(x)$ satisfying
condition (\ref{2delta}) over the search region $D$, namely, it
follows
 \beq
\Phi_k(x)  \le  g(x)= f(x)+\xi(x), \hspace{1cm} \forall\,\, \xi(x)
\in [-\delta,\delta], \,\,x \in D. \label{minorant}
 \eeq
\end{theorem}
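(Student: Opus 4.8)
The plan is to prove the inequality \eqref{minorant} pointwise in $x$, by fixing an arbitrary $x\in D$ and an arbitrary noise realization $\xi(x)\in[-\delta,\delta]$, and then showing $\varphi(x,x_i)\le g(x)$ for \emph{each} index $i$; since $\Phi_k(x)$ is the maximum of finitely many such terms, the bound on the maximum follows immediately. So the real content is a single-index estimate: for every $1\le i\le k$,
\[
 g(x_i)-L|x_i-x|-2\delta \;\le\; f(x)+\xi(x).
\]

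First I would rewrite the left-hand side using $g(x_i)=f(x_i)+\xi(x_i)$, so that the claim becomes
\[
 f(x_i)+\xi(x_i)-L|x_i-x|-2\delta \;\le\; f(x)+\xi(x).
\]
The Lipschitz condition \eqref{Lip} applied to the pair $x_i,x$ gives $f(x_i)-f(x)\le L|x_i-x|$, i.e. $f(x_i)\le f(x)+L|x_i-x|$. Substituting this upper bound for $f(x_i)$ on the left and cancelling the $L|x_i-x|$ terms reduces the inequality to
\[
 \xi(x_i)-\xi(x)-2\delta \;\le\; 0,
\]
which holds because $\xi(x_i)\le\delta$ and $-\xi(x)\le\delta$ by the noise bound \eqref{noise}, so $\xi(x_i)-\xi(x)\le 2\delta$. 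This closes the single-index estimate, and taking the maximum over $i$ yields $\Phi_k(x)\le g(x)$ for the fixed $x$ and the fixed $\xi$; since $x\in D$ and $\xi(x)\in[-\delta,\delta]$ were arbitrary, \eqref{minorant} holds in the required uniform sense. The fact that $\Phi_k$ is itself $2\delta$-Lipschitz (indeed each $\varphi(\cdot,x_i)$ is Lipschitz by Lemma~\ref{L1}, and a finite maximum of $L$-Lipschitz functions is $L$-Lipschitz, hence a fortiori $2\delta$-Lipschitz) can be noted in passing, so that the statement that $\Phi_k$ ``satisfies condition \eqref{2delta}'' is justified.

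There is no serious obstacle here; the estimate is a three-line chain of inequalities. The only point requiring a moment of care is the logical quantifier structure: the constant $-2\delta$ in the definition \eqref{fi} of $\varphi$ is exactly what makes the bound robust to the \emph{worst case} over both the noise $\xi(x_i)$ already realized at the sampled point and the noise $\xi(x)$ that may be realized at the query point — losing a $\delta$ at each end. I would therefore emphasize in the write-up that the $2\delta$ correction is tight: if $\xi(x_i)=\delta$ and $\xi(x)=-\delta$, equality is attained in the noise step, which is precisely why a single $\delta$ (as in the naive minorant $\psi$ of Figure~\ref{fig:unsafe_expansion_example}) would fail and why Theorem~\ref{T1} forces the $\delta$-Lipschitz framework.
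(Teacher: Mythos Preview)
Your proof is correct and essentially the same as the paper's: the paper selects the maximizing index $j$ and routes the estimate through the $2\delta$-Lipschitz inequality for $g$ from Theorem~\ref{T1}(ii) to reach $\Phi_k(x)\le f(x)-\delta$, whereas you bound each $\varphi(x,x_i)$ directly by inlining the decomposition $g=f+\xi$ and the Lipschitz bound on $f$---the underlying inequality chain is identical. One minor clarification: in the theorem statement the clause ``satisfying condition~\eqref{2delta}'' modifies $g(x)$, not $\Phi_k(x)$, so your closing remark is not needed here (the Lipschitz property of $\Phi_k$ is recorded separately as Corollary~\ref{C1}).
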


\begin{proof} Let us consider an arbitrary point $x \in D$. In
order to prove (\ref{minorant}), it is sufficient to show that
 \[
 \Phi_k(x)  \le \min_{\xi(x) \in [-\delta,\delta]} g(x) = f(x)-\delta.
 \]
 It
follows from (\ref{fi}) that there exists an index $j,\, 1 \le j \le
k,$ such that
 \beq
 \Phi_k(x)=\varphi(x,x_j) = g(x_j)- L |x_j-x|-2\delta.
 \label{safe1}
 \eeq
 Then, since $g(x)$ satisfies
condition (\ref{2delta}), we obtain
 \[
   g(x_j) -g(x) \le |   g(x_j) -g(x) |
  \le L|x_j-x|+2\delta.
 \]
 Notice that this estimate holds for any value of $g(x) \in G(x)$
 including, therefore, the minimal possible value of
 $g(x)=f(x)-\delta$. As a result we have
 \beq
 g(x_j) -(f(x)-\delta) \le      L|x_j-x|+2\delta.
 \label{safe2}
 \eeq
By adding and subtracting $f(x)-\delta$ to (\ref{safe1}) and
combining it with  (\ref{safe2}) we get
  \[
  \Phi_k(x) = g(x_j)
 -L|x_j-x|-2\delta + (f(x)-\delta)-(f(x)-\delta) \le
 \]
 \[   f(x)-\delta
 -L|x_j-x|-2\delta + L|x_j-x|+2\delta = f(x)-\delta.
 \]
Since the point $x$ has been chosen  arbitrarily, the last
inequality concludes the proof. \end{proof}

\begin{figure}[t]
  \centering
  \includegraphics[width=1.0\linewidth]{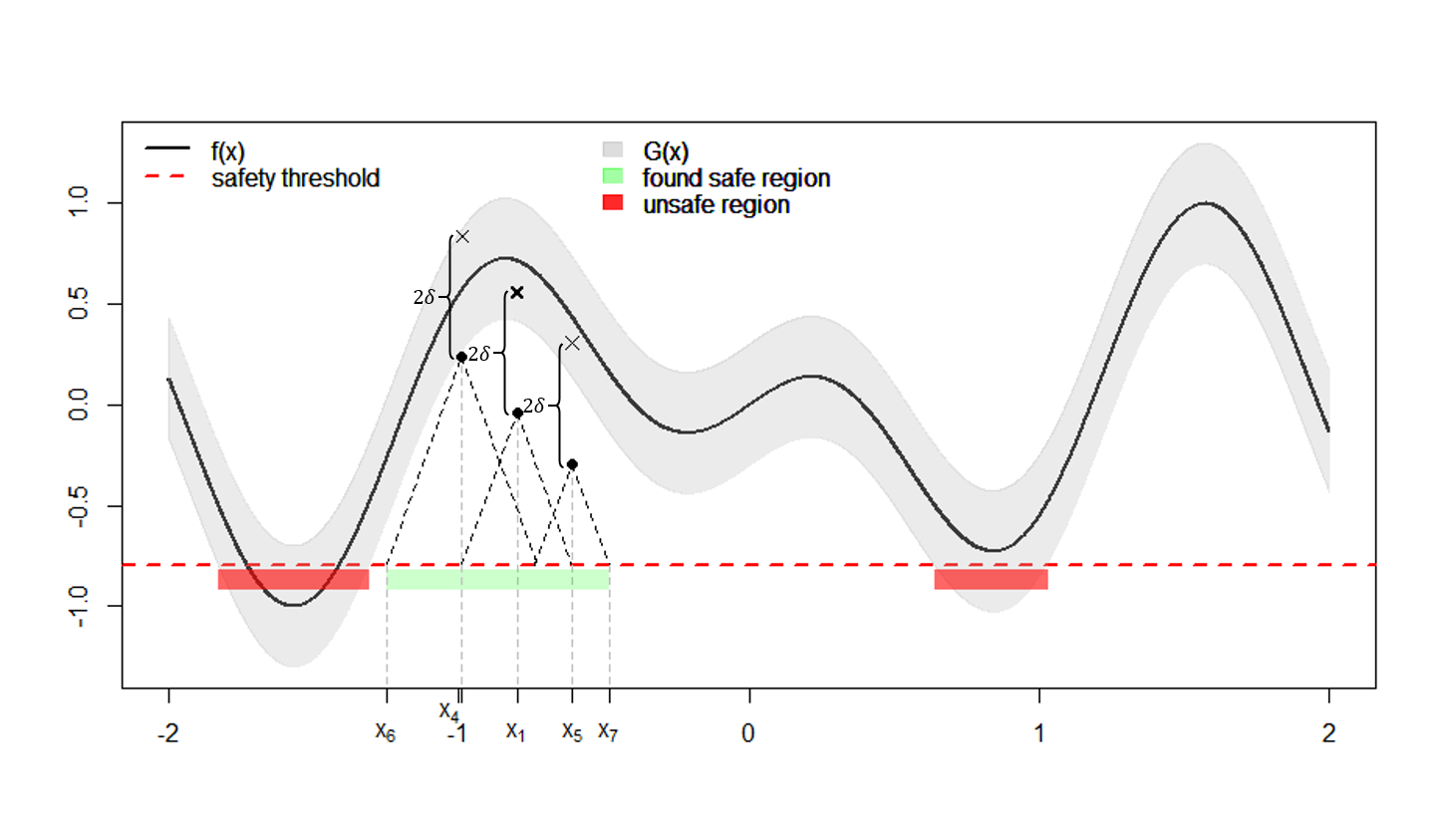}
\caption{\vspace{-1mm}The introduced in (\ref{fi})
$\delta$-Lipschitz minorants allow us to expand the initial safe
region  taking into account the noise correctly starting from the
initial safe point~$x_1$ and ensuring $\varphi(x,x_i) \le
f(x)-\delta, i=1,4,5$ (compare with
Fig.~\ref{fig:unsafe_expansion_example})}
  \label{fig:safe_expansion_example}
\end{figure}

\begin{corollary}\label{C1}
Function $\Phi_k(x)$ from  (\ref{Fi}) satisfies Lipschitz condition
(\ref{Lip}).
\end{corollary}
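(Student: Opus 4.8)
The plan is to reduce Corollary~\ref{C1} to results already established. The key observation is that $\Phi_k(x)$ is defined in (\ref{Fi}) as a pointwise maximum of finitely many functions $\varphi(x,x_i)$, $1 \le i \le k$, and Lemma~\ref{L1} already tells us that each individual $\varphi(x,x_i)$ satisfies the Lipschitz condition (\ref{Lip}) with the same constant $L$. Therefore it suffices to prove the general fact that the pointwise maximum of finitely many functions sharing a common Lipschitz constant $L$ is itself Lipschitzian with constant $L$.

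First I would fix two arbitrary points $x', x'' \in D$ and let $j$ be an index attaining the maximum in the definition of $\Phi_k(x')$, so that $\Phi_k(x') = \varphi(x',x_j)$ and, by definition of the maximum, $\Phi_k(x'') \ge \varphi(x'',x_j)$. Then
 \[
 \Phi_k(x') - \Phi_k(x'') \le \varphi(x',x_j) - \varphi(x'',x_j) \le |\varphi(x',x_j) - \varphi(x'',x_j)| \le L|x'-x''|,
 \]
where the last inequality is exactly Lemma~\ref{L1}. By symmetry (swapping the roles of $x'$ and $x''$ and choosing the index attaining the maximum at $x''$) one obtains $\Phi_k(x'') - \Phi_k(x') \le L|x'-x''|$ as well, and combining the two gives $|\Phi_k(x') - \Phi_k(x'')| \le L|x'-x''|$. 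Since $x', x''$ were arbitrary, this is precisely (\ref{Lip}) for $\Phi_k$.

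There is essentially no obstacle here: the argument is the standard ``max of Lipschitz functions is Lipschitz'' lemma, and every ingredient (the common constant $L$, the Lipschitz bound on each $\varphi(x,x_i)$) is already in hand from Lemma~\ref{L1}. The only point requiring a little care is the symmetry step — one must re-select the maximizing index at the second point rather than reuse $j$ — but this is routine. I would therefore present the proof in just a few lines, invoking Lemma~\ref{L1} explicitly and noting that the same reasoning applies verbatim to the minorant $\Phi_k$ regardless of the value of the noise, since $\Phi_k$ itself does not depend on $\xi(x)$.
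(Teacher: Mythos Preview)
Your proof is correct and follows exactly the paper's approach: invoke Lemma~\ref{L1} to get that each $\varphi(x,x_i)$ is $L$-Lipschitz, then use the standard fact that a pointwise maximum of finitely many $L$-Lipschitz functions is $L$-Lipschitz. The paper simply states this last fact without the short verification you provide, so your version is, if anything, slightly more detailed.
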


\begin{proof}  Due to Lemma \ref{L1}, functions   $\varphi(x,x_i),
1 \le i \le k,$ satisfy Lipschitz condition (\ref{Lip}). Since
$\Phi_k(x)$ is maximum of $k$ Lipschitz functions satisfying
(\ref{Lip}), this condition   holds for $\Phi_k(x)$, as well.
\end{proof}

Theorems~\ref{T1} and~\ref{T2} allow us to substitute unsafe
Lipschitz functions $\psi(x,x_i)$ with safe $\delta$-Lipschitz
minorants $\varphi(x,x_i)$ and give a direct suggestion on how the
initial safe region should be expanded. The process of expansion is
illustrated in Fig.~\ref{fig:safe_expansion_example}. We evaluate
$g(x)$ at the initial safe point $x_1$ (the result of evaluation
$g(x_1)$ is shown in Fig.~\ref{fig:safe_expansion_example}  by sign
``x'') and construct the minorant $\varphi(x,x_1)$ (the value
$\varphi(x_1,x_1)=g(x_1)-2\delta$ is shown by black dot). Then, by
using $\varphi(x,x_1)$, we obtain the initial  safe subregion $[x_4,
x_5]$. Clearly, since $\varphi(x,x_1) \le \psi(x,x_1)$, it follows
(cf. Fig.~\ref{fig:unsafe_expansion_example}) that $[x_4, x_5]
\subset [x_2, x_3]$. To expand the safe region we evaluate $g(x_4)$
and $g(x_5)$ and construct $\varphi(x,x_4)$ and $\varphi(x,x_5)$.
Theorem~\ref{T2} ensures that the obtained region $[x_6, x_7]$
(shown in green) is safe. The process is repeated and evaluations of
$g(x)$ are iteratively performed at the boundaries of the currently
found safe region as follows.

To continue our analysis let us suppose for simplicity that $\Omega$
is a simply connected region, i.e., $m=1$ in (\ref{omega}) (if this
is not the case then considerations analogous to what follows should
be executed for all zones $\Omega_j, 1 \le j \le m$). Assume  that
$g(x)$ has been evaluated at  safe points $x_i, 1 \le i \le k,$ and
let us indicate the points
 \beq
  \check{x}_k = \min \{ x_i: x_i \in \Omega, 1 \le i \le k \}, \hspace{5mm}
 \hat{x}_k = \max \{ x_i: x_i \in \Omega, 1 \le i \le k \},
  \label{safe3}
 \eeq
being the current minimal and maximal safe points, respectively.
Thus, the current found safe region is $[\check{x}_k,\hat{x}_k]$. By
using functions $\varphi(x,\check{x}_k)$ and $\varphi(x,\hat{x}_k)$
new safe points $\check{x}_{k+1}$ and $\hat{x}_{k+2}$ can be
obtained as solutions of equations
 \[
 g(\check{x}_k)-L(\check{x}_k-x)-2\delta = h, \hspace{1cm} x < \check{x}_k,
 \]
  \[
 g(\hat{x}_k)-L(x-\hat{x}_k)-2\delta = h, \hspace{1cm} x >
 \hat{x}_k,
 \]
 and calculated as follows
 \beq
 \check{x}_{k+1} = \check{x}_k - L^{-1}(g(\check{x}_k)- 2\delta-
 h),
 \hspace{5mm}
 \hat{x}_{k+2}  = \hat{x}_k +L^{-1}(g(\hat{x}_k)- 2\delta-
 h).
 \label{safe5}
 \eeq
It can be easily seen that the points  $\check{x}_{k+1}$ and
$\hat{x}_{k+2}$ are indeed safe. In fact, it follows from
Theorem~\ref{T2} that due to the construction
 \[
 h=\varphi(\check{x}_{k+1},\check{x}_k)  \le g(\check{x}_{k+1}), \hspace{1cm}
  h=\varphi(\hat{x}_{k+2},\hat{x}_k) \le g(\hat{x}_{k+2}).
 \]

Thus, the introduced  $\delta$-Lipschitz framework allows us to
construct  a safe expansion mechanism based on the knowledge of $L$
and $\delta$ and to obtain   estimates of the   left and right
margins of the safe region $\Omega$ (let us call them $l$ and $r$,
respectively).  However, there exists an additional trouble related
to situations taking place due to the presence of the noise when the
process of expansion approaches borders of the current safe region.
Let us illustrate this difficulty by returning once again to
Fig.~\ref{fig:safe_expansion_example}. It can be  seen that the
value $\varphi(x_4,x_4)$ is very close to the lower boundary of
$G(x_4)$ equal to $f(x_4)-\delta$ because $g(x_4)$ is very close to
the upper boundary of $G(x_4)$, namely, to $f(x_4)+\delta$. In
contrast, $\varphi(x_5,x_5)$ is very far away from   the lower
boundary of $G(x_5)$ equal to $f(x_5)-\delta$ because $g(x_5)$ is
very close to the lower boundary of $G(x_5)$, i.e., to
$f(x_5)-\delta$. Thus, the tightness of the minorant
$\varphi(x,x_i)$ depends on the level and sign of noise $\xi (x)$.
If $\xi (x) \rightarrow  \delta$, then the minorant is good, if $\xi
(x) \rightarrow  -\delta$, then the minorant is not accurate. This
fact can become crucial on the borders of the current safe region.
For instance, if the next function evaluation (see
Fig.~\ref{fig:safe_expansion_example}) is executed at the point
$x_6$ and $g(x_6)\approx f(x_6)+\delta$ then $\varphi(x_6,x_6)
\approx f(x_6)-\delta$, the constraint $\varphi(x_6,x_6) > h$ will
be satisfied an it will be possible to execute an additional (though
small) safe step on the left from $x_6$ and to enlarge the safe
region. In contrast, if $g(x_6)\approx f(x_6)-\delta$ then
$\varphi(x_6,x_6)=g(x_6)-2\delta$ will be less than $h$ and,
therefore, it will not be possible to expand the current safe
region.

This observation suggests that if at a current  point, $z$, that is
tested for expanding the safe region using the function
$\varphi(x,y), y \in \Omega,$ it follows that $\varphi(z,y) < h$,
then it is necessary to continue to evaluate $g(z)$ several times
trying to obtain a value close to $f(z)+\delta$ and, therefore, to
obtain the best possible minorant. This strategy is illustrated in
Fig.~\ref{fig:delta_safe_expansion} where the repeated evaluations
of $g(x)$ at the borders of the current safe region can be clearly
seen. These repetitions allow us to establish the borders with a
high precision.

\begin{figure}[t]
  \centering
  \includegraphics[width=1.0\linewidth]{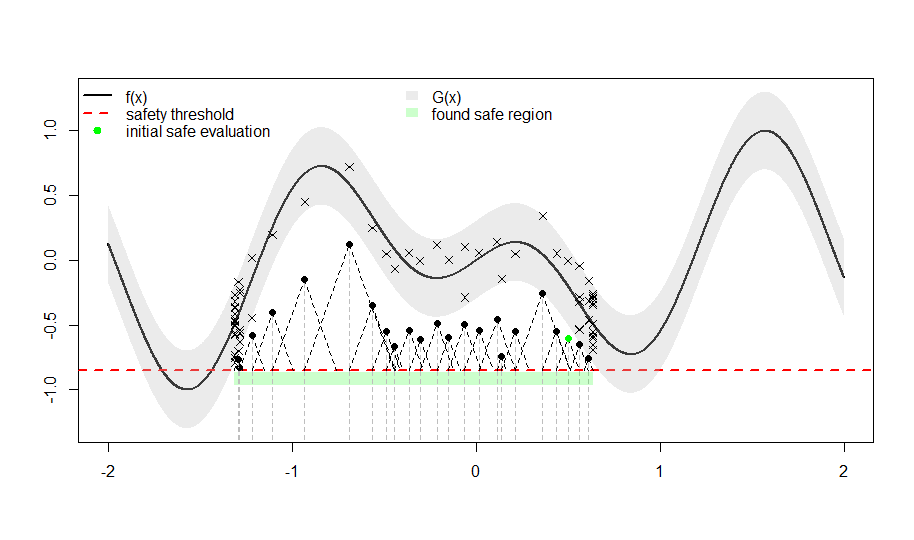}
\caption{\vspace{-1mm}The $\delta$-Lipschitz framework  reliably
expands the safe region. Repeated evaluations of $g(x)$ at the
borders of the current safe region can be clearly  seen.}
  \label{fig:delta_safe_expansion}
\end{figure}

In practice, the number of repetitions allowing us to obtain an
expansion can be very high and closer we are to the border of the
safe region smaller the expansion steps will become. Thus, it is
necessary to introduce a parameter $\nu$ fixing the maximal allowed
number of repetitions of evaluations of $g(x)$ at a point $z$ during
the search of the border and a  tolerance for the expansion process.
Recall that  we know that the level of noise is bounded by $\delta$.
Thus, if during  iterated evaluations of $g(z)$ two values, $y_1$
and $y_2$, $y_1 > y_2,$ have been obtained as results of these
evaluations then the following condition using an a priory given
accuracy $\sigma>0$ for stopping iterated evaluations of $g(x)$
 \beq
y_1 -y_2 \ge 2\delta-\sigma  \label{safe4}
 \eeq
can be used.

Let us illustrate the work of the  parameters $\nu$ and $\sigma$ by
the following examples. In the first of them shown in
Fig.~\ref{fig:delta_safe_expansion} the safe region for $h=-0.85$
has been identified using parameters $\nu=15$ and $\sigma=0.05\cdot
2\delta$. The search of the safe region has been stopped after 15
repetitions      at the lower and 15 repetitions at the upper margin
of the current safe region, i.e., the criterion on $\nu$ has stopped
the search.

\begin{figure}[t]
  \centering
  \includegraphics[width=1.0\linewidth]{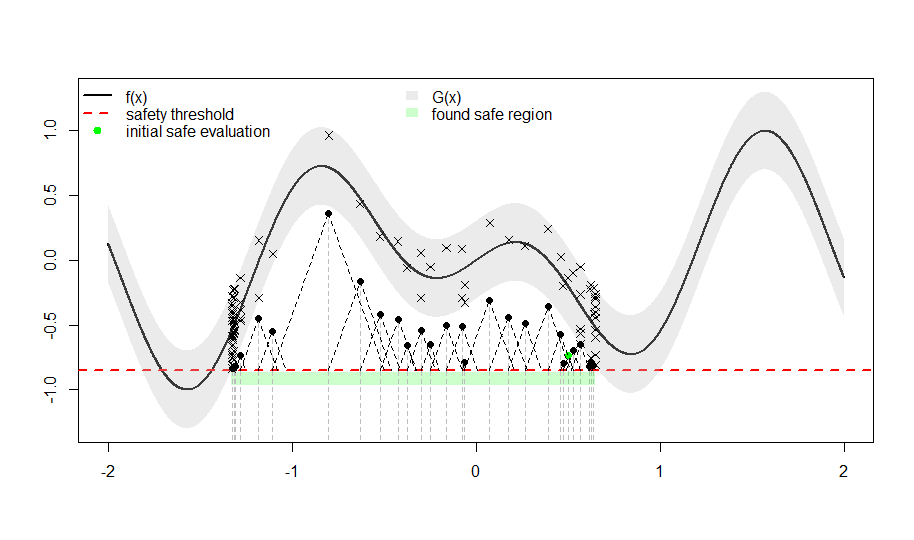}
\caption{\vspace{-1mm}The search of the safe region stops using the
criterion on $\sigma$ from (\ref{safe4}) with $\sigma=0.1\cdot
2\delta$. It can be seen by comparing with
Fig.~\ref{fig:delta_safe_expansion} that the density of evaluations
at the borders of the safe region is lower.}
  \label{fig:delta_safe_expansion1}
\end{figure}

In Fig.~\ref{fig:delta_safe_expansion1}, the same values of  $h$ and
$\nu$ have been used but $\sigma=0.1\cdot 2\delta$ has been taken.
Function evaluations at the lower and upper boundaries  of the safe
region were, respectively, 13 and 10. Namely, at both sides, the
search was stopped due to the criterion on $\sigma$. Notice that the
function $f(x)$ is the same in both experiments shown in
Figs.~\ref{fig:delta_safe_expansion}
and~\ref{fig:delta_safe_expansion1}. However, due to the presence of
the  noise the resulting functions $g(x)$ are different.

\begin{figure}[t]
  \centering
  \includegraphics[width=1.0\linewidth]{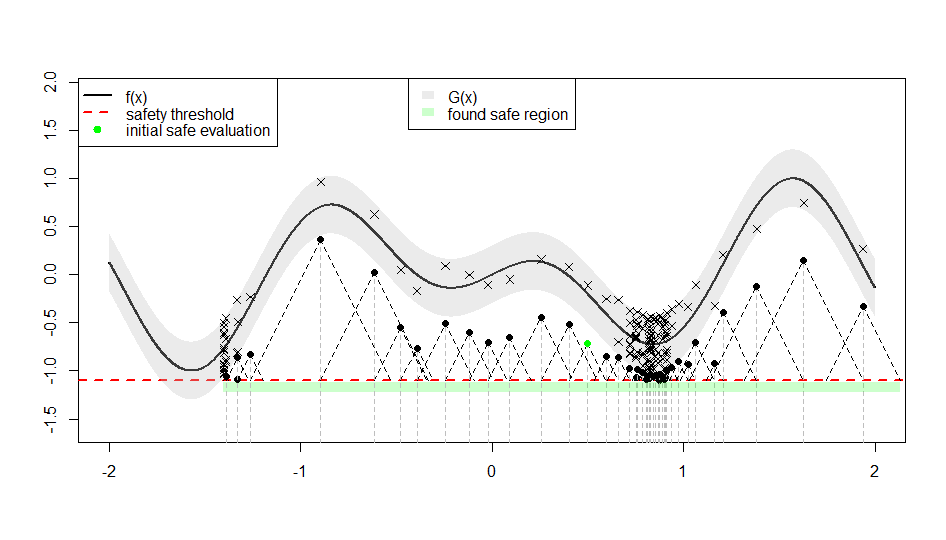}
  \caption{\vspace{-1mm}The initial safe point (shown  in green) is located quite far
from the upper margin of $\Omega$ and a local minimum of $f(x)$, in
between, is quite close  to the safety threshold $h$. At a
neighborhood of this local minimizer a huge number of function
evaluations is performed trying to observe a   value of $g(x)$ which
would guarantee  a reliable  expansion of the current safe region.
Since in this subregion $G(x)$ is very close to $h$, the size of
each new step of the expansion is quite small in this zone.}
  \label{fig:delta_safe_expansion_2}
\end{figure}

It should be stressed that, even if the objective function
evaluations are repeatedly performed at the boundaries of the
estimated safe region to better define them, there is no way to go
through unsafe zones to other safe subregions (that could contain
the desired global maximum) if these subregions do not contain
originally provided safe points. In fact, in this specific example
the safe subregion obtained starting from the initial safe point
$x=0.5$ shown in Fig.~\ref{fig:delta_safe_expansion1} in green does
not coincide with the entire safe region  $\Omega$. As a result, the
identified safe subregion does not contain the global maximum over
the whole   $\Omega$ since it is located within the unfound
subregion of  $\Omega$.

The importance of the strategy consisting in insisting on repeated
evaluations on the borders of the current search region is shown
even better in Fig.~\ref{fig:delta_safe_expansion_2} where the same
function $f(x)$ is considered but the safety threshold is decreased
to $h=-1.1$.  In this case, the threshold $h$ limits the safe region
on the left-hand side only, whereas  the right border of $\Omega$
coincides with the right margin of the search space~$D$.

It can be seen from Fig.~\ref{fig:delta_safe_expansion_2} that the
safe expansion mechanism is able to correctly estimate the part of
the safe region $\Omega$ containing the global optimum. This happens
in spite of the fact that the initial safe point (shown in
Fig.~\ref{fig:delta_safe_expansion_2} in green) is located quite far
from the upper margin of $\Omega$ and a local minimum of $f(x)$, in
between, is quite close  to the safety threshold $h$. At a
neighborhood of this local minimizer a huge number of function
evaluations is performed trying to observe a function value which
would guarantee  a reliable  expansion of the current safe region.
Since in these subregion $G(x)$ is very close to $h$, the size of
each new step of the expansion is quite small. In this example, the
safe subregion has been identified using $\nu=15$ and
$\sigma=0.1\cdot 2\delta$. The executed function evaluations at the
lower boundary  of the safe region were 6 (i.e., the process of the
expansion on this side was stopped due to the criterion on
$\sigma$). The expansion on the upper boundary  of $\Omega$ was
stopped because the right margin of the search region $D$ has been
reached.

A detailed description of Algorithm~1 executing the described above
procedure of the reliable expansion of the safe region can be found
in Appendix.

\section{Global maximization over the found safe region}
\label{maxim}

Let us suppose now that the $\delta$-Lipschitz expansion procedure
described in the previous section has stopped after evaluating
$g(x)$ at $k$ trial points and a desired approximation
$\widehat{\Omega}_k$ (indicated just $\widehat{\Omega}$ hereinafter)
of the safe region $\Omega$ has been obtained. After that, the
second phase of the algorithm, namely, global maximization over the
found safe region~$\widehat{\Omega}$, starts. For simplicity we
suppose here, as was done in the previous section, that
$\widehat{\Omega}$ is a simply connected region (all the subsequent
considerations in this section can easily be extended to the case of
disjoint subregions composing $\widehat{\Omega}$). Notice also that
the maximization problem~(\ref{problem}) over $\Omega$ is
substituted by its approximation, i.e., by the problem
\begin{equation}
x^* = \underset{x \in \widehat{\Omega} \subseteq \Omega \subseteq
D}{\text{argmax}} \hspace{3mm} g(x), \hspace{1cm}g(x)=f(x)+\xi(x),
 \label{problem2}
\end{equation}
The resulting problem (\ref{Lip}), (\ref{problem2}),
(\ref{noise})--(\ref{G(x)}) is solved by using all previously
executed evaluations  of $g(x)$ at the points $x_i \in
\widehat{\Omega}, 1 \le i \le k$ (see Algorithm~1 in Appendix). The
maximization procedure constructs a majorant $\Gamma_k(x), x \in
\widehat{\Omega},$ for $g(x)$ similarly to the construction of the
minorant $\Phi_k(x)$ from (\ref{Fi}). Recall that during the safe
expansion procedure there could be produced points $x_i$ where
$g(x)$ has been evaluated $\nu(x_i)$ times (where $\nu(x_i)$ in any
case is not superior to the maximal allowed number, $\nu$, of
repeated evaluations of $g(x)$ at a single point). This means that
different values $g_j(x_i)$ could be obtained during the $j$th
evaluation, $1 \le j \le \nu(x_i) \le \nu$ of $g(x)$ at a point
$x_i$,   $1 \le   i \le k$. Recall, that it was necessary to use
values
 \beq
 \hat{g}(x_i) = \max_{1 \le j \le \nu(x_i)} g_j(x_i), \hspace{5mm} 1 \le i \le
 k,
 \label{hat}
   \eeq
in order to build a better minorant $\Phi_k(x)$. Analogously, in
order to build a better majorant $\Gamma_k(x)$, we shall use values
 \beq
 \check{g}(x_i) = \min_{1 \le j \le \nu(x_i)} g_j(x_i), \hspace{5mm} 1
\le i \le k. \label{check}
   \eeq
Then, the function $\Gamma_k(x)$ is built as follows
 \vspace*{-1mm}
 \beq
  \Gamma_k(x)= \min_{1 \le i \le k} \gamma(x,x_i), \hspace{1cm}  x \in \widehat{\Omega},
\label{Gamma}
 \eeq
 \vspace*{-3mm}
 \beq
 \gamma(x,x_i) = \check{g} (x_i)+L|x_i-x|+2\delta, \hspace{1cm} x \in \widehat{\Omega},
 \label{gamma}
   \eeq
where $\delta$ is from (\ref{noise}). Both functions, $\Phi_k(x)$
and $\Gamma_k(x)$, are presented in Fig.~\ref{fig:Majorant} using
the data from Fig.~\ref{fig:delta_safe_expansion_2}.

\begin{figure}[t]
  \centering
  \includegraphics[width=1.0\linewidth]{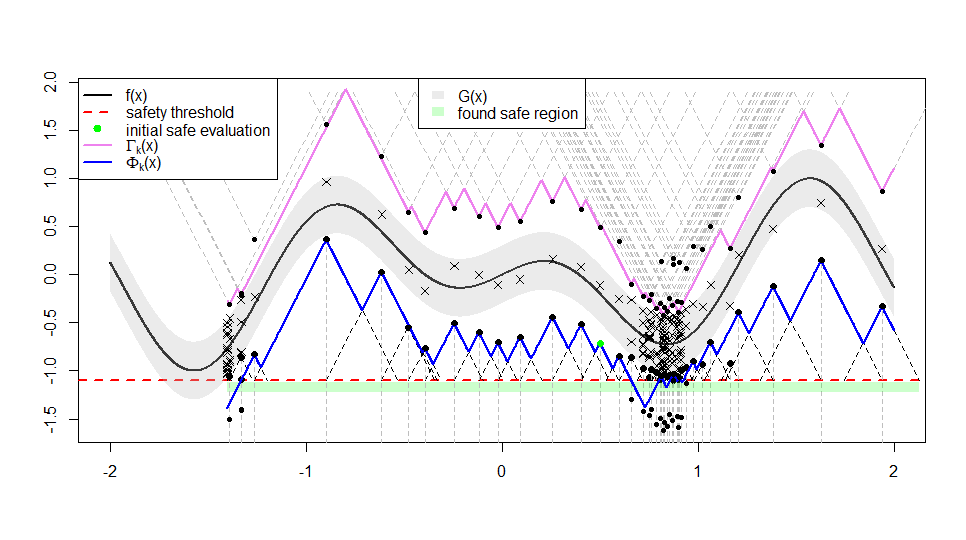}
\caption{The majorant $\Gamma_k(x)$ and the minorant $\Phi_k(x)$
constructed using the data from
 Fig.~\ref{fig:delta_safe_expansion_2}}
  \label{fig:Majorant}
\end{figure}

The following result ensures that
 \beq
\Gamma_k  \ge  g(x)= f(x)+\xi(x), \hspace{1cm} \forall\,\, \xi(x)
\in [-\delta,\delta], \,\,x \in \widehat{\Omega}.
 \label{majorant}
 \eeq

\begin{theorem}\label{T3}
For any value of noise $\xi(x) \in [-\delta,\delta]$ the function
$\Gamma_k(x)$ from (\ref{Gamma}) is a majorant for   the objective
function $g(x)$ satisfying (\ref{2delta}) over the search region
$D$, i.e., condition (\ref{majorant}) holds.
\end{theorem}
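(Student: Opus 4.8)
The plan is to reproduce the proof of Theorem~\ref{T2} almost line for line, with the roles of $\max$ and $\min$, of the upper and lower noise bounds, and of minorant and majorant exchanged. First I would fix an arbitrary point $x \in \widehat{\Omega}$ and an arbitrary admissible noise value $\xi(x)\in[-\delta,\delta]$, and observe that since $g(x)=f(x)+\xi(x)\le f(x)+\delta$, it is enough to establish $\Gamma_k(x)\ge f(x)+\delta$. Because $\Gamma_k(x)=\min_{1\le i\le k}\gamma(x,x_i)$ by~(\ref{Gamma}), this reduces to proving the single-branch bound $\gamma(x,x_i)\ge f(x)+\delta$ for every $i$, $1\le i\le k$.

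The second step is the observation that plays here the role that $\hat{g}(x_i)\in G(x_i)$ plays in Theorem~\ref{T2}: the quantity $\check{g}(x_i)$ from~(\ref{check}) is a minimum of finitely many numbers of the form $f(x_i)+\xi$ with $|\xi|\le\delta$, hence $\check{g}(x_i)=f(x_i)+\xi_{x_i}$ with $\xi_{x_i}\in[-\delta,\delta]$, so that $f(x_i)\le\check{g}(x_i)+\delta$. (Using the smallest observed value is precisely what makes $\gamma(x,x_i)$ the tightest admissible majorant branch through $x_i$, mirroring the use of the largest observed value for the minorant.) Then I would chain the Lipschitz condition~(\ref{Lip}) for $f$, the noise bound~(\ref{noise}) at the point $x$, and this inequality to obtain, for each $i$,
\beq
g(x)=f(x)+\xi(x)\le f(x_i)+L|x-x_i|+\delta\le \check{g}(x_i)+L|x-x_i|+2\delta=\gamma(x,x_i),
\eeq
and finally take the minimum over $i$ to conclude $g(x)\le\Gamma_k(x)$; since $x$ and $\xi(x)$ were arbitrary, this is exactly~(\ref{majorant}). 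Equivalently, the middle inequality could be read off from Theorem~\ref{T1}(ii)/(\ref{2delta}) applied to the pair $\check{g}(x_i)$ and $g(x)$, but the direct computation via~(\ref{Lip}) and~(\ref{noise}) keeps the symmetry with Theorem~\ref{T2} transparent.

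I do not expect a genuine obstacle here — the whole argument is the dual of Theorem~\ref{T2} and uses only~(\ref{Lip}) and~(\ref{noise}). The two points that need a moment of care are: making sure the estimate is applied to the \emph{fixed} number $\check{g}(x_i)$ against the value $g(x)$ of the single noise realization under consideration (one must not conflate different realizations when invoking a $2\delta$-Lipschitz-type inequality), and the verification that $\check{g}(x_i)\in G(x_i)$, which is what licenses replacing the unknown value $f(x_i)$ by $\check{g}(x_i)+\delta$.
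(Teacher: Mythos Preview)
Your proposal is correct and is exactly the dual of the argument for Theorem~\ref{T2}; the paper's own proof simply states that the result ``follows immediately from Theorem~\ref{T2}'' without spelling out the details, so you have in fact written out what the paper leaves implicit.
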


\begin{proof} The theorem follows immediately from Theorem~\ref{T2}. \end{proof}

\begin{corollary}\label{C2}
Function $\Gamma_k(x)$  from  (\ref{Gamma}) satisfies Lipschitz
condition (\ref{Lip}).
\end{corollary}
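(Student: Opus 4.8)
The plan is to mirror exactly the argument used for Corollary~\ref{C1}. First I would observe that each component function $\gamma(x,x_i)$ defined in (\ref{gamma}) differs from the component function $\varphi(x,x_i)$ of (\ref{fi}) only by the sign of the term $L|x_i-x|$ and by an additive constant (and by the replacement of $\hat g(x_i)$ with $\check g(x_i)$, which is merely another constant). Hence I would first establish, by a one-line computation completely analogous to the proof of Lemma~\ref{L1}, that $\gamma(x,x_i)$ satisfies the Lipschitz condition (\ref{Lip}) with constant $L$: for any $x',x''\in\widehat\Omega$,
\[
|\gamma(x',x_i)-\gamma(x'',x_i)| = L\,\bigl|\,|x_i-x'|-|x_i-x''|\,\bigr| \le L\,|x'-x''|,
\]
the last step being the reverse triangle inequality, exactly as in Lemma~\ref{L1}.

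Next I would invoke the standard fact that the pointwise minimum of finitely many functions, each satisfying a Lipschitz condition with the same constant $L$, again satisfies that Lipschitz condition with constant $L$. Since $\Gamma_k(x)=\min_{1\le i\le k}\gamma(x,x_i)$ by (\ref{Gamma}), and since each $\gamma(x,x_i)$ satisfies (\ref{Lip}) by the previous step, the conclusion follows immediately. This is literally the dual of the argument in Corollary~\ref{C1}, where the maximum of Lipschitz functions was used; I would simply note that the same elementary estimate $|\min_i a_i-\min_i b_i|\le\max_i|a_i-b_i|$ gives the result for the minimum.

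I do not anticipate any real obstacle here: the statement is an easy structural consequence of Lemma~\ref{L1}'s analogue and the closure of the Lipschitz class under finite minima. The only point requiring the smallest care is making explicit that replacing $\max$ by $\min$ in the envelope construction does not affect the Lipschitz estimate — but this is routine, since both $\sup$ and $\inf$ of a uniformly Lipschitz family preserve the modulus of continuity. Accordingly the whole proof can be stated in two or three sentences, exactly parallel to the proof of Corollary~\ref{C1}.
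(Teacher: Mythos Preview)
Your proposal is correct and follows essentially the same approach as the paper: the paper's proof also first notes that each $\gamma(x,x_i)$ is Lipschitz by considerations analogous to Lemma~\ref{L1}, and then concludes by the fact that the pointwise minimum of finitely many Lipschitz functions with the same constant is again Lipschitz. Your write-up merely makes explicit the reverse triangle inequality step and the $\min$-stability estimate that the paper leaves implicit.
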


\begin{proof} It follows from considerations analogous to proof
of Lemma~\ref{L1} that functions $\gamma(x,x_i)$, $1 \le i \le k,$
satisfy Lipschitz condition (\ref{Lip}). Since $\Gamma_k(x)$ is
minimum of $k$ Lipschitz functions satisfying (\ref{Lip}), this
condition holds for $\Gamma_k(x)$, as well.
\end{proof}

In order to estimate the global maximizer of $g(x), x \in
\widehat{\Omega},$ we propose a procedure that generalizes  to the
case of maximization of noisy functions  the algorithm of Piyavskij
(see \cite{piya2}) proposed for Lipschitz optimization and working
without any noise (this algorithm is called PA hereinafter). The PA
works constructing a majorant $\Lambda_k(x)$ similar to
$\Gamma_k(x)$ at each iteration using previously executed
evaluations of the Lipschitz objective function $f(x)$. The absence
of the noise allows the PA to improve $\Lambda_k(x)$ after each
evaluation of $f(x)$ that is executed at a point
 \[
 x^{k+1}_{\Lambda_k}= \arg
\max_{x \in D} \Lambda_k(x).
 \]
 In other words, due to the Lipschitz
condition (\ref{Lip}) it follows that
 \beq
  f(x^{k+1}_{\Lambda_k}) \le   \max_{x \in D}  \Lambda_k(x), \hspace{5mm}
    \Lambda_{k+1}(x) \le \Lambda_k(x),\,\,\,  x \in D,
 \label{improve}
 \eeq
 and, as a consequence, after each new evaluation of $f(x)$ the
majorant $\Lambda_k(x)$ becomes closer and closer to the objective
function $f(x)$ allowing the PA to obtain a good approximation of
the global maximum.

In contrast, in the noisy framework we deal with, a condition for
$\Gamma_k(x)$  similar to (\ref{improve}) cannot be written down
since the noise (see Fig.~\ref{fig:delta_safe_expansion_2}) can
provoke a situation where
  \beq
  g(x^{k+1}) >   \max_{x \in \widehat{\Omega}}  \Gamma_k(x), \hspace{5mm} x^{k+1}=
   \arg \max_{x \in \widehat{\Omega}}
  \Gamma_k(x).
   \label{noimprove}
 \eeq
In cases where the condition  (\ref{noimprove}) holds, the
evaluation of $g(x^{k+1})$ does not improve $\Gamma_k(x)$ and leads
to the equivalence $\Gamma_{k+1}(x)=\Gamma_k(x)$. Let us recall that
we have already faced similar situations before, when we got results
of evaluation of $g(x)$ that did not allow us to enlarge the current
safe region. Therefore, the remedy will be the same, i.e., execution
of $\nu(x^{k+1}) \le \nu$ repeated evaluations of $g(x)$ at the
point $x^{k+1}$ from (\ref{noimprove}) until either the number of
possible repetitions $\nu$ will be reached or a value  less than
$\max_{x \in \widehat{\Omega}}  \Gamma_k(x)$ will be produced. In
the former case the search stops and in the latter one   (similarly
to (\ref{check})) the value
 \beq
 \check{g}(x^{k+1}) = \min_{1 \le j \le \nu(x^{k+1})} g_j(x^{k+1}) <
  \max_{x \in \widehat{\Omega}}  \Gamma_k(x) \label{check_2}
   \eeq
is accepted and $\Gamma_{k+1}(x) \le \Gamma_k(x)$ is built.

In practice, the operations of building $\Gamma_k(x)$ from the trial
data $(x_i, \check{g}(x_i)), 1 \le i \le k,$ provided by Algorithm~1
are executed as follows. To each trial point $x_i$ we associate a
value
 \beq
 z_i = \min_{1 \le j \le k} \gamma (x_i,x_j), \hspace{2mm}   1 \le i \le k.
 \label{z_i}
   \eeq
   Then, it follows from the Lipschitz condition (\ref{Lip}) that
 \[
 \max_{x \in [x_{i-1},\;x_i]} \Gamma_k(x)  =  \max_{x \in
 [x_{i-1},\;x_i]}\min
 \{ z_{i-1}+L(x-x_{i-1}), z_i+L(x_i-x) \} =
 \]
 \beq
= R_i =
 0.5(z_{i-1}+z_i) + 0.5L(x_i-x_{i-1}), \,\,\,  2 \le i \le k, \label{R_i}
   \eeq
   and it is reached at the point
   \beq
 \bar{x}_i = 0.5 (x_i+x_{i-1}) +  0.5(z_i-z_{i-1})/L, \label{bar_x}
   \eeq
where hereinafter the value $R_i$ is called \emph{characteristic} of
the interval $[x_{i-1},x_i]$. As a result, it follows that
 \beq
  \max_{x \in \widehat{\Omega}}   \Gamma_k(x) = \max_{2 \le i \le k}
  R_i
  \label{maxR}
   \eeq
and, therefore, the point $x^{k+1}$ from (\ref{noimprove}) can be
calculated as
 \beq
 x^{k+1} = \bar{x}_t, \hspace{5mm} t= \arg \max_{2 \le i \le k}
  R_i. \label{newpoint}
   \eeq
If the interval $[x_{t-1},x_t]$ is larger than the preset accuracy
$\varepsilon$ (i.e., the stopping rule is not satisfied), the
process of global maximization tries to update the majorant
$\Gamma_{k}(x) $ by obtaining a value $\check{g}(x^{k+1})$
satisfying (\ref{check_2}). In case this is not possible after~$\nu$
evaluations of $g(x^{k+1})$, the algorithm  stops. Otherwise, the
interval $[x_{t-1},x_t]$ is subdivided in two subintervals
$[x_{t-1},x^{k+1}]$ and $[x^{k+1},x_t]$,   the number of points
where $g(x)$ has been evaluated becomes $k+1$, the intervals having
numbers larger than $t$ are renumbered and the two new intervals
become $[x_{t-1},x_t]$ and $[x_t,x_{t+1}]$, respectively. Then, the
value $\check{g}(x^{k+1})$ is assigned to $z_t$   and  all the
values $z_i$ are renewed as follows
 \beq
  z_i = \min  \{ z_i, \gamma (x_i,x_t) \},\hspace{5mm} 1 \le i \le k+1.
 \label{z_i2}
   \eeq
This concludes construction of the new majorant  $\Gamma_{k+1}(x) $
and the optimization process can be repeated. A detailed description
of Algorithm~2 executing the global maximization over
$\widehat{\Omega}$ is given in Appendix (the maximization is
performed separately at each subregion of $\widehat{\Omega}$, but it
can also be executed simultaneously over all disjoint subregions of
the safe region, thus accelerating the search).

After satisfaction of one of the two stopping rules of Algorithm~2,
a lot of information regarding the problem (\ref{Lip}),
(\ref{problem}), (\ref{noise})--(\ref{G(x)}) is returned. First of
all, the found safe region $\widehat{\Omega}$ is provided. Then, the
largest found value  $g^{*}_k$ and the corresponding point
$x^{*}_k$, namely,
 \beq
 g^{*}_k = \max  \{ \hat{g}(x_i):\,\, 1 \le i \le k \},
 \hspace{5mm} x^{*}_k = \arg \max  \{ \hat{g}(x_i):\,\, 1 \le i \le k
 \}
 \label{g*k}
   \eeq
can be taken as an estimate of the global maximum for the safe
optimization problem (\ref{Lip}), (\ref{problem2}),
(\ref{noise})--(\ref{G(x)}). Then, the minorant $\Phi_k(x)$ and the
majorant $\Gamma_{k}(x) $ for functions $f(x)$ and $g(x)$ are
supplied, i.e.,
 \[
\Phi_k(x) \le f(x) \le \Gamma_{k}(x),  \hspace{5mm} \Phi_k(x) \le
g(x) \le \Gamma_{k}(x), \hspace{5mm} x \in \widehat{\Omega},
 \]
where   both $\Phi_k(x)$ and   $\Gamma_{k}(x) $ are Lipschitz
piece-wise linear functions (see Fig.~\ref{fig:Majorant}). Their
simple structure allows one to delimit easily areas of
$\widehat{\Omega}$ where the global maximizers of $g(x)$ and $f(x)$
cannot be located. In fact, the set
 \[
 N^k_g = \{ x : \Gamma_{k}(x) < g^{*}_k   \}
 \]
cannot contain the global maximizer of $g(x)$ and
 \[
 N^k_f = \{ x : \Gamma_{k}(x) < g^{*}_k - \delta \}
 \]
cannot contain the global maximizer of $f(x)$  (see
Fig.~\ref{fig:fig8} for illustration). The following lemma
establishes that each new trial point produced by Algorithm~2 is
chosen with the tentative to reduce the area    $\widehat{\Omega}
\setminus N^k_g $ containing the global maximizer of $g(x)$. This
reduction becomes effective in case the condition (\ref{noimprove})
does not hold at the chosen point $x^{k+1}$.

\begin{figure}[t]
    \centering
    {
    \includegraphics[width=1.0\textwidth, height=0.65\textwidth]{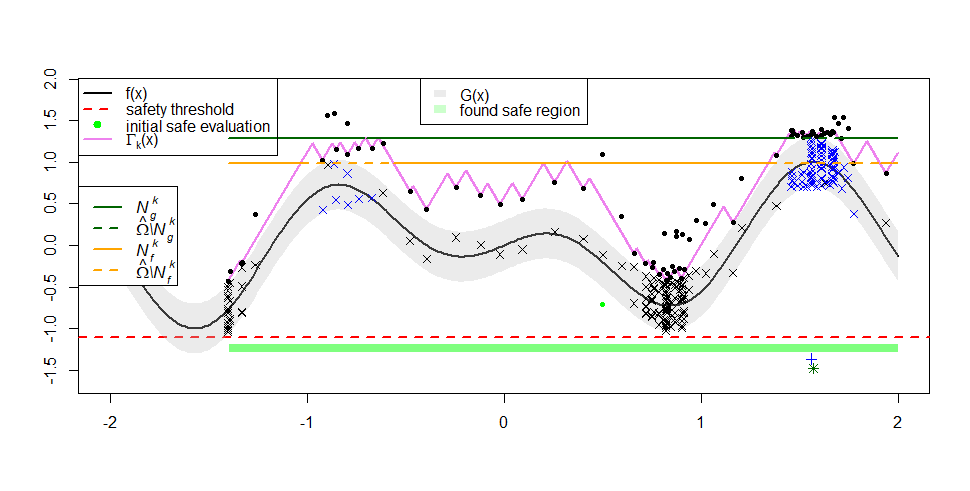}}
\caption{Results of the global maximization performed by Algorithm~2
using the data from Fig.~\ref{fig:Majorant} within  the found  safe
region $\widehat{\Omega}$. Algorithm~2 returns also regions $N^k_g$
and $N^k_f$ that cannot contain the global maximizers of $g(x)$ and
$f(x)$, respectively, and regions $\widehat{\Omega} \setminus N^k_g
 \subset \widehat{\Omega} \setminus N^k_f$ that contain the global solutions.
  }
    \label{fig:fig8}
\end{figure}

\begin{lemma}\label{L2}
All   trial points produced by Algorithm 2 at any iteration number
$k > 1$ will belong to the area   $\widehat{\Omega} \setminus
N^k_g$.
\end{lemma}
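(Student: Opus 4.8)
The plan is to show that the trial point $x^{k+1}$ selected by Algorithm~2 always satisfies $\Gamma_k(x^{k+1}) \ge g^*_k$, which is exactly the statement that $x^{k+1} \in \widehat{\Omega} \setminus N^k_g$ (recall $N^k_g = \{x : \Gamma_k(x) < g^*_k\}$). By \eqref{newpoint}, $x^{k+1} = \bar{x}_t$ where $t = \arg\max_{2 \le i \le k} R_i$, and by \eqref{R_i}--\eqref{maxR} the value attained there is $\Gamma_k(x^{k+1}) = R_t = \max_{x \in \widehat{\Omega}} \Gamma_k(x)$. So it suffices to prove that the global maximum of the current majorant is never below the current record value, i.e.
\[
\max_{x \in \widehat{\Omega}} \Gamma_k(x) \ge g^*_k.
\]

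First I would recall that, by Theorem~\ref{T3}, $\Gamma_k$ is a majorant for every admissible realization of $g$ over $\widehat{\Omega}$; in particular $\Gamma_k(x_i) \ge g_j(x_i)$ for every evaluation index $j$ at every trial point $x_i$, $1 \le i \le k$. Taking the maximum over $j$ gives $\Gamma_k(x_i) \ge \hat{g}(x_i)$ for all $i$. Hence
\[
\max_{x \in \widehat{\Omega}} \Gamma_k(x) \ge \max_{1 \le i \le k} \Gamma_k(x_i) \ge \max_{1 \le i \le k} \hat{g}(x_i) = g^*_k,
\]
where the last equality is the definition \eqref{g*k}. This yields the desired inequality, and therefore $\Gamma_k(x^{k+1}) \ge g^*_k$, i.e. $x^{k+1} \notin N^k_g$. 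Since moreover the construction always picks $x^{k+1} = \bar{x}_t \in [x_{t-1},x_t] \subseteq \widehat{\Omega}$, we conclude $x^{k+1} \in \widehat{\Omega} \setminus N^k_g$ for every $k > 1$, which is the claim.

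The only point that needs a little care — and what I expect to be the main (though modest) obstacle — is making precise that the ``trial point at iteration $k$'' is indeed $\bar{x}_t$ as given by \eqref{newpoint}, even in the situation where condition \eqref{noimprove} holds and the majorant is not updated: in that case the algorithm performs repeated evaluations of $g$ at the same point $x^{k+1}$, but that point is still the one produced by \eqref{newpoint}, so $\Gamma_k(x^{k+1}) = \max_x \Gamma_k(x) \ge g^*_k$ remains valid and membership in $\widehat{\Omega}\setminus N^k_g$ is unaffected. One should also note that the values $z_i$ in \eqref{z_i} satisfy $z_i = \Gamma_k(x_i) \ge \hat g(x_i)$ since $\gamma(x_i,x_j) = \check g(x_j) + L|x_i-x_j| + 2\delta \ge \check g(x_i)+2\delta \ge \hat g(x_i)$ when $j=i$ (using $\check g(x_i) \ge \hat g(x_i) - 2\delta$, which follows from $|g_p(x_i)-g_q(x_i)| \le 2\delta$ for any two evaluations at the same point, a consequence of \eqref{2delta} with $x_1=x_2=x_i$), so the piecewise-linear representation \eqref{R_i}--\eqref{maxR} correctly computes a quantity that is $\ge g^*_k$. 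Collecting these observations completes the proof.
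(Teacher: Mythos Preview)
Your proof is correct and follows essentially the same route as the paper: both observe that the new trial point $x^{k+1}$ is chosen at the global maximum of the current majorant $\Gamma_k$, and that this maximum is at least $g^*_k$, hence $x^{k+1}\notin N^k_g$. Your version is in fact more careful than the paper's---the paper merely asserts the key inequality (indeed asserting strict inequality, which is more than is needed), whereas you justify $\max_{x\in\widehat\Omega}\Gamma_k(x)\ge g^*_k$ explicitly via Theorem~\ref{T3}, and your supplementary remarks about repeated evaluations and the $z_i$ values are correct but not strictly necessary for the lemma.
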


\begin{proof} Due to its definition, the area $N^k_g$ contains
points $x$ for which $\Gamma_{k}(x) < g^{*}_k$. It follows from
(\ref{maxR}) and (\ref{newpoint}) that the new trial point $x^{k+1}$
will be chosen by Algorithm~2 at the point corresponding to $\max_{x
\in \widehat{\Omega}} \Gamma_k(x)$. The fact that this value is
strictly larger than $g^{*}_k$ concludes the proof. \end{proof}

The introduced  Algorithm~2 works with the function $g(x)$ and looks
for its global maximizer $x^*$ from (\ref{problem2}). However,
sometimes the interest of the person executing the optimization is
related to the original function $f(x)$. Clearly, due to the noise,
the global maxima of $g(x)$ and   $f(x)$ can be different both in
value and in location. The following theorem establishes conditions
where Algorithm~2 can localize   not only the area $\widehat{\Omega}
\setminus N^k_g$ containing  the global maximum of the function
$g(x)$ but also can indicate a neighborhood of the global maximizer
$X^*$ of $f(x)$ over\,$\widehat{\Omega}$.

\begin{theorem}\label{T4}
Suppose that:

i. The function $f(x)$ satisfies the Lipschitz condition
(\ref{Lip}).

ii. The function $f(x)$ has the only global maximizer $X^*$ over
$\widehat{\Omega}$   and for any local maximizer $x'$ of $f(x)$ such
that $x'\neq X^*,\, x' \in \widehat{\Omega},$ it follows that
 \beq
 f(X^*) > f(x')+2\delta +  \Delta,
  \label{Delta}
   \eeq
where $\Delta>0$ is a fixed finite number and $\delta$, as usual, is
the maximal level of the noise  from~(\ref{noise}).

iii. Parameters of Algorithm~2 are chosen as follows:
$\varepsilon=0$, $\nu=\infty$.

Then during the work of  Algorithm~2 there exists an iteration
number $d^*$ such that for $k>d^*$ the only local maximizer of
$f(x)$ located at the region $\widehat{\Omega} \setminus N^k_g$
is~$X^*$.
\end{theorem}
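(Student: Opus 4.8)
The plan is to show that eventually the majorant $\Gamma_k(x)$ becomes tight enough near $X^*$ and sufficiently controlled away from $X^*$ so that the region $\widehat{\Omega}\setminus N^k_g$ excludes a fixed neighborhood of every non-global local maximizer $x'$.

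\medskip

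First I would observe that, because $\varepsilon=0$ and $\nu=\infty$, Algorithm~2 never stops due to its stopping rules and never gets stuck at a point: whenever the condition (\ref{noimprove}) holds at $x^{k+1}$ it keeps re-evaluating $g(x^{k+1})$, and since the noise is bounded and the best-of-$\nu$ value $\check{g}(x^{k+1})$ can be driven as close as desired to $f(x^{k+1})-\delta$, after finitely many repetitions at that point the majorant is strictly reduced, i.e. $\Gamma_{k+1}(x)<\Gamma_k(x)$ and (\ref{check_2}) holds. Thus infinitely many genuine improvements occur and, by the standard Piyavskij-type argument (the characteristics $R_i$ in (\ref{R_i}) control the maximum of $\Gamma_k$ over each interval, and the selection rule (\ref{newpoint}) always subdivides the interval with the largest characteristic), the trial points become dense in $\widehat{\Omega}$ and $\max_{x}\Gamma_k(x)\to\sup_{x\in\widehat{\Omega}}(f(x)-\delta+2\delta)=f(X^*)+\delta$. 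Wait — more carefully: the majorant bounds $g(x)$, and at a dense set of points its value equals $\check{g}(x_i)+2\delta$, which tends to $f(x_i)+\delta$ from above in the worst case but in fact can be made to approach $f(x_i)-\delta+2\delta=f(x_i)+\delta$; hence $\Gamma_k(x)\to f(x)+\delta$ uniformly on $\widehat{\Omega}$ by the Lipschitz equicontinuity from Corollary~\ref{C2}.

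\medskip

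Next I would pin down $g^*_k$, the incumbent from (\ref{g*k}). Since $\hat{g}(x_i)=\max_j g_j(x_i)$ and repeated evaluations at a point approach $f(x_i)+\delta$ from below, and the trial points become dense near $X^*$ (because $X^*$ is where $\Gamma_k$ attains, in the limit, its largest value $f(X^*)+\delta$, so the selection rule repeatedly probes that neighborhood), one gets $g^*_k\to f(X^*)+\delta$ as well; in particular for every $\eta>0$ there is $d_1$ with $g^*_k>f(X^*)+\delta-\eta$ for $k>d_1$. Combine this with the uniform bound $\Gamma_k(x)\le f(x)+\delta+o(1)$: choose $\eta<\Delta/2$ and let $d_2$ be such that $\Gamma_k(x)<f(x)+\delta+\Delta/2$ for all $x$ and $k>d_2$. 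Then for a non-global local maximizer $x'$ and for any $x$ in the neighborhood $U'$ where $f(x)\le f(x')+\Delta/4$ (such a neighborhood exists by continuity of $f$), and $k>d^*:=\max\{d_1,d_2\}$, we have
\[
\Gamma_k(x) < f(x)+\delta+\tfrac{\Delta}{2} \le f(x')+\tfrac{\Delta}{4}+\delta+\tfrac{\Delta}{2}
< f(X^*)-2\delta-\Delta+\delta+\tfrac{3\Delta}{4} < f(X^*)+\delta-\eta < g^*_k,
\]
so $U'\subset N^k_g$, i.e. no point of $U'$, and in particular $x'$ itself, lies in $\widehat{\Omega}\setminus N^k_g$. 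Since $f$ has finitely many local maximizers on $\widehat{\Omega}$ (or at least the non-global ones are separated from $X^*$ by the margin (\ref{Delta})), taking the maximum of the finitely many thresholds gives a single $d^*$ that works, and $X^*$ itself remains in $\widehat{\Omega}\setminus N^k_g$ because $\Gamma_k(X^*)\ge g(X^*)$ for some realized evaluation and, by density, $\Gamma_k(X^*)\to f(X^*)+\delta>g^*_k-\text{(small)}$; more simply, $\max_x\Gamma_k(x)>g^*_k$ always by Lemma~\ref{L2}, and the maximizing point converges to $X^*$.

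\medskip

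The main obstacle I anticipate is the rigorous justification that the incumbent $g^*_k$ actually climbs to within $\eta$ of $f(X^*)+\delta$ while simultaneously $\Gamma_k$ is squeezed down to $f(x)+\delta$: this requires arguing that the algorithm does place enough trials near $X^*$ (a consequence of the characteristic-driven selection plus the fact that, by (\ref{Delta}), the limiting majorant's global max is strictly at $X^*$) and that the $\nu=\infty$ repetition mechanism can be invoked finitely-but-unboundedly often to get values of $g$ arbitrarily close to $f(X^*)+\delta$. One must also handle the bookkeeping that $g^*_k$ uses $\hat g$ (best-of-$\nu$, approaching $f+\delta$) while $\Gamma_k$ uses $\check g$ (worst-of-$\nu$, approaching $f-\delta$), and that the gap between these two can be made smaller than $\sigma$-type tolerances only asymptotically; the clean way around this is to exploit $\nu=\infty$ so that both $\hat g(x_i)\to f(x_i)+\delta$ and $\check g(x_i)\to f(x_i)-\delta$ can be assumed in the limit at points visited infinitely often, and for the finitely-visited ones the density/Lipschitz argument suffices. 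The rest is the routine Piyavskij convergence machinery adapted to the $2\delta$-Lipschitz setting established in Corollary~\ref{C2}.
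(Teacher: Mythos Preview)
Your argument rests on two limiting identities --- $\Gamma_k(x)\to f(x)+\delta$ uniformly on $\widehat{\Omega}$ and $g^*_k\to f(X^*)+\delta$ --- and neither follows from the stated assumptions. The only hypothesis on the noise is the bound $|\xi|\le\delta$; nothing says that repeated samples at a fixed point fill the interval $[f(x)-\delta,\,f(x)+\delta]$, so you cannot conclude that $\check g(x^{k+1})$ ``can be driven as close as desired to $f(x^{k+1})-\delta$'' nor that $\hat g$ near $X^*$ rises to $f(X^*)+\delta$. Even granting a favourable noise model, Algorithm~2 never attempts to push $\check g$ to its floor: the repetition loop terminates the moment (\ref{check_2}) holds, i.e.\ as soon as a single value below $R_{\max}$ is observed. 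At a trial point one therefore only knows $f(x_i)+\delta\le\Gamma_k(x_i)\le \check g(x_i)+2\delta\le f(x_i)+3\delta$, and this gap does not close in general; your displayed chain (which needs $\Gamma_k(x)<f(x)+\delta+\Delta/2$ and $g^*_k>f(X^*)+\delta-\eta$ simultaneously) then fails. The density claim is also in tension with the theorem itself: the paper's conclusion is precisely that after $d^*$ iterations all trials fall inside a fixed neighbourhood of $X^*$, so the sequence of trial points is \emph{not} dense in $\widehat{\Omega}$.

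The paper avoids all of this by using only inequalities valid for \emph{every} noise realisation. From (\ref{Delta}) and the Lipschitz bound it builds a neighbourhood $\omega(X^*)$ of positive width on which $f(x)>f(x')+2\delta$ for every non-global local maximiser $x'$; this forces $g(x)>g(x')$ deterministically on $\omega(X^*)$, regardless of $\xi$. Since $\max_x\Gamma_k(x)$ is non-increasing and $\omega(X^*)$ has positive measure, some trial $x^{\tilde d}$ eventually lands in $\omega(X^*)$, which already yields $g^*_k\ge g(x^{\tilde d})>g(x')$. A contradiction on characteristics then finishes: if the interval containing $x'$ were subdivided infinitely often, its characteristic $R_{i(k)}$ would shrink to at most $g(x')<g^*_k$ and could never again be selected. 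The role of $\Delta$ is solely to give $\omega(X^*)$ positive width; no asymptotic identification of $\Gamma_k$ with $f+\delta$ and no assumption on the spread of the noise is needed.
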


\begin{proof} Supposition (iii) regarding parameters of
Algorithm~2  means that the majorant $\Gamma_{k}(x)$ will be
improved at each iteration at each new trial point $x^{k+1}$ because
the number of allowed repeated evaluations of $g(x^{k+1})$ is
$\nu=\infty$ and, therefore, the algorithm will not stop due to this
stopping rule (see lines 48--49 of Algorithm 2). Then, since we have
$\varepsilon=0$ in the second stopping rule, the algorithm will not
stop due to this rule (see lines 43 and 70 of Algorithm~2), as well.
Thus, supposition (iii) ensures that the sequence of trial points
generated by Algorithm~2 is infinite and the majorant
$\Gamma_{k}(x)$ is improved at each iteration.

Since $f(x)$ satisfies the Lipschitz condition (\ref{Lip}) and due
to the finiteness of $\Delta$ it follows from supposition (ii) and
condition (\ref{Delta}) that there exists a   neighborhood
$\omega(X^*)$ of the point~$X^*$ such that
 \[
 \omega(X^*)=(X^*-\omega,X^*+\omega), \hspace{5mm} \omega
 \ge  (f(X^*)-\Delta)/L,
 \]
 \beq
  f(x) > f(x')+2\delta, \hspace{5mm} x \in \omega(X^*),
 \label{safeT4.1}
   \eeq
and there are no other local maximizers of $f(x)$ belonging to
$\omega(X^*)$.

By taking into account the definition of $g(x)$ in (\ref{problem})
and (\ref{noise}), condition (\ref{safeT4.1}) can be re-written in
the form
 \[
   g(x) \ge f(x) - \delta > f(x')+ \delta \ge g(x'), \hspace{5mm} x \in
   \omega(X^*),\,\, x' \in \widehat{\Omega},
   \]
meaning that for any point $x \in \omega(X^*)$ and any local
maximizer $x'\neq X^*,\, x' \in \widehat{\Omega},$ it follows that
the value of $f(x)$ decreased by the maximal level of the noise is
larger than the value of $f(x')$ increased by the highest level of
the noise and, as a result, we have
 \beq
   g(x)   >   g(x'), \hspace{5mm} x \in \omega(X^*),\,\,
   x' \in \widehat{\Omega}, \,\, x'\neq X^*.
   \label{safeT4.2}
   \eeq

On the other hand, Algorithm 2 works in such a way that the majorant
$\Gamma_{k}(x)$ is improved at each iteration and the point of its
improvement corresponds to its maximum (see (\ref{maxR}) and
(\ref{newpoint})). This means  that $\max_{x  \in \widehat{\Omega}}
\Gamma_{k}(x)$ decreases with the increase of the iteration
number~$k$ (notice that this decrease is not strict since there can
be situations where $\max_{x  \in \widehat{\Omega}} \Gamma_{k}(x)$
is reached at several points). The decrease of  $\max_{x  \in
\widehat{\Omega}} \Gamma_{k}(x)$ and the presence of a finite
neighborhood $\omega(X^*)$ where all points $x \in \omega(X^*)$
satisfy (\ref{safeT4.2})  mean that there will exist an iteration
number $\tilde{d}$ such that the new point $x^{\tilde{d}}$  will
belong to $\omega(X^*)$ and, therefore, the value of the function
$g(x^{\tilde{d}})$ will satisfy
 \beq
 g(x^{\tilde{d}}) > \max_{x' \in \widehat{\Omega}} g(x')
  \label{safeT4.3}
   \eeq
for all local maximizers $x'\neq X^*,\,x' \in \widehat{\Omega}$. Due
to Lemma~\ref{L2}, new trials  with the numbers $k> \tilde{d}$ will
be executed at regions where $\Gamma_{k}(x) > g(x^{\tilde{d}})$.
Therefore, since (\ref{safeT4.3}) holds and due to the already
mentioned decrease of  $\max_{x  \in \widehat{\Omega}}
\Gamma_{k}(x)$, after a finite number  $\hat{d}$  of iterations of
Algorithm~2 new trials will not be placed outside $\omega(X^*)$. In
fact, suppose that there exists a local maximizer $x' \in
\widehat{\Omega}$ that does not belong to  $\omega(X^*)$, at the
$k$-th iteration   $x' \in [x_{i(k)-1},x_{i(k)}]$, and new trials
are placed infinitely many times within this interval. It follows
from (\ref{bar_x}) and footnote~\ref{note_L} (see
section~\ref{sec:2}) that in this case
 \[
  \lim_{k\rightarrow\infty} x_{i(k)} - x_{i(k)-1} = 0
 \]
and, therefore,
  \[
  \lim_{k\rightarrow\infty} x_{i(k)-1}  = x', \hspace{5mm}
  \lim_{k\rightarrow\infty} x_{i(k)}  = x'.
 \]
As a result, for the characteristic $ R_{i(k)}$ from (\ref{R_i}) of
this interval we have
 \[
  \lim_{k\rightarrow\infty} R_{i(k)}   \le g(x')
 \]
where the sign $\le$ here is due to the rule (\ref{z_i2}). However,
since $g(x^{\tilde{d}}) > g(x')$, the new trial cannot be placed
within the interval  $[x_{i(k)-1},x_{i(k)}]$ and, therefore, our
supposition that new trial points will be placed in this interval
infinitely many times is false.

Thus, by taking $d^*=\tilde{d}+\hat{d}$ and reminding that
$\omega(X^*)$ does not contain other local maximizers we conclude
the proof. \end{proof}

\section{Numerical experiments} \label{sec:sec5}

The experiments have been organized on three test cases,  where the
parameters of the $\delta$-Lipschitz framework have been set as
follows: both $\delta$ and $h$ have been set at $10\%$ of the
min-max range of $f(x)$, $\varepsilon=0.001$, $\nu=15$, and
$\sigma=0.10$. For all test cases,    initial safe points have been
chosen randomly.

\begin{itemize}
\item \textbf{Test case 1.}
This test case consists of four test problems. The safe expansion
phase can lead to the identification of a unique safe region. Table
\ref{tab:TC1} presents the structure of the test problems used in
this test case. Fig. \ref{fig:TestCase1Part1} shows for each test
problem the final state of each phase in optimization processes
(safe expansion -- phase 1 and global maximization -- phase 2).

\item
\textbf{Test case 2.} This test case consists of six test problems.
The safe expansion phase can lead to the identification of the
maximum of two disconnected safe regions. Table \ref{tab:TC2}
presents the structure of the test problems used in this test case.
Figs. \ref{fig:TestCase2Part1}--\ref{fig:TestCase2Part2} show for
each test problem  the final state of each phase in optimization
processes (safe expansion -- phase 1 and global maximization --
phase 2).

\item
\textbf{Test case 3.} This test    case consists of eight test
problems. The safe expansion phase can lead to the identification of
multiple disconnected safe regions. Table \ref{tab:TC3} presents the
structure of the test problems used in this test case. Figs.
\ref{fig:TestCase3Part1}--\ref{fig:TestCase3Part2} show for each
test problem  the final state of each phase in optimization
processes (safe expansion -- phase 1 and global maximization --
phase 2).
\end{itemize}

Table \ref{tab:Results} presents results of the numerical
experiments and
Figs.~\ref{fig:TestCase1Part1}--\ref{fig:TestCase3Part2} illustrate
them. Figures related to Phase~1 show initial safe points by green
dots and results of evaluations by symbol `x'; they present also the
constructed minorant $\Phi_k(x)$ and the found safe region. Figures
related to Phase~2 show results of evaluations executed during the
global maximization by blue symbols `x'. The best found point is
shown by symbol `+' and the global maximizer by symbol `*'. These
figures show also the constructed majorant $\Gamma_k(x)$ together
with the functions $\gamma(x,x_i)$ from (\ref{gamma}) used to build
$\Gamma_k(x)$. The black dots above the majorant represent values
that either have not improved the majorant or  better values (see
(\ref{z_i2})) than these ones have been obtained during the
maximization process.

\begin{table}[!t]
{\footnotesize \caption{Test Case 1} \label{tab:TC1}
\begin{center}
{\renewcommand{\arraystretch}{2.4}
\centering
\vspace*{-2mm} \begin{tabular}{|c|p{3.2cm}|p{1.20cm}|c|c|p{0.60cm}|} \hline 
\bf Problem & \bf \hspace*{5mm}Lipschitz Function & \bf Interval &
\bf \emph{L} & \bf Threshold & \bf Ref. \\ \hline 1 & \(f(x) =
-\frac{1}{6}x^{6}+\frac{52}{25}x^{5}-\frac{39}{80}x^{4}-\frac{71}{10}x^{3}+\frac{79}{20}x^{2}+x-\frac{1}{10}\)
& [-1.5,11] & 13870 & 2974.180 &  \cite{Sergeyev:et:al.(2016a)} \\
\hline 2 & \(f(x) = - \sin x^{3} - \cos x^{3}\) & [0,6.28] & 2.2 &
-0.800 & \cite{molinaro2001finding} \\ \hline 3 & \(f(x) = x - \sin
3x  + 1 \) & [0,6.5] & 4 & 1.202 & \cite{molinaro2001finding} \\
\hline 4 & \(f(x) = \frac{x^{2}-5x+6}{x^{2}+1} \) & [-5,5] & 6.5 &
0.671 & \cite{Sergeyev:et:al.(2016a)} \\ \hline
\end{tabular}
}
\end{center}
}
%
{\footnotesize \caption{Test Case 2} \label{tab:TC2}
\begin{center}
{\renewcommand{\arraystretch}{2.4}
\vspace*{-1mm} \begin{tabular}{|c|c|c|c|c|c|} \hline \bf Problem &
\bf Lipschitz Function & \bf Interval & \bf \emph{L} & \bf Threshold
& \bf Ref.\\ \hline 5 & \(f(x) = - \sin x - \sin \frac{10}{3} x\) &
[2.7,7.5] & 4.29 & -0.609 & \cite{hansen1992global} \\ \hline 6 &
\(f(x) = (-3x+1.4) \sin18x \) & [0,1.2] & 36 & -1.271 &
\cite{hansen1992global} \\ \hline 7 & \(f(x) = (x + \sin x )
e^{-x^{2}}  \) & [-10,10] & 2.5 & -0.659 & \cite{hansen1992global}
\\ \hline 8 & \(f(x) =  - \sin x - \sin \frac{2}{3} x \) &
[3.1,20.4] & 1.7 & -1.483 & \cite{hansen1992global}
\\ \hline 9 & \(f(x) = e^{-x} \sin 2 \pi x \) & [0,4] & 6.5 & -0.347
& \cite{hansen1992global}\\ \hline 10 & \(f(x) = - e^{-x} \sin 2 \pi
x + 0.5 \) & [0,4] & 6.5 & -0.154 & \cite{molinaro2001finding}\\
\hline
\end{tabular}
}
\end{center}
}
\end{table}

\begin{table}[!ht]
{\footnotesize \caption{Test Case 3} \label{tab:TC3}
\begin{center}
{\renewcommand{\arraystretch}{2.3}
\vspace*{-2mm} \begin{tabular}{|c|c|c|c|c|c|} \hline \bf Problem &
\bf Lipschitz Function & \bf Interval & \bf \emph{L} & \bf Threshold
& \bf Ref. \\ \hline 11 & \(f(x) = \sum_{i=1}^{5} i \sin[(i+1)x+i]+3
\) & [-10,10] & 67 & -24.335 & \cite{molinaro2001finding} \\ \hline
12 & \(f(x) = \cos x - \sin 5x
+ 1  \) & [0,7] & 5.951 & -0.545 & \cite{molinaro2001finding}\\
\hline 13 & \(f(x) = \begin{cases}
                    \cos5x, \quad x \leq \frac{3}{2} \pi\\
                    \cos x, \quad \mathrm{otherwise}
                  \end{cases}\) & [0,18] & 4.999 & -0.800 & \cite{molinaro2001finding}\\ \hline
14 & \(f(x) = \begin{cases}
                    \sin x, \quad x \leq \pi\\
                    \sin5x, \quad \mathrm{otherwise}
                  \end{cases}\) & [-10,10] & 4.999 & -0.800 & \cite{molinaro2001finding}\\ \hline
15 & \(f(x) = - \sum_{i=1}^{5} \cos[(i+1)x] \) & [-10,10] & 18.119 &
-4.229 & \cite{molinaro2001finding} \\ \hline 16 & \(f(x) = x |\sin
x| + 6 \) & [-10,10] & 9.632 & -0.332 & \cite{molinaro2001finding}\\
\hline 17 & \(f(x) = |x \sin x| - 1.5 \) & [-10,10] & 9.632 & -0.709
& \cite{molinaro2001finding}\\ \hline 18 & \(f(x) = \begin{cases}
                    \sin x, \quad \sin x > \cos x\\
                    \cos x, \quad \mathrm{otherwise}
                  \end{cases}\) & [-10,10] & 1 & -0.519 & \cite{molinaro2001finding}\\ \hline
\end{tabular}
}
\end{center}
}
\end{table}

\begin{table}[t]
{\footnotesize \caption{The number of points at which the objective
function has been evaluated and the number of these evaluations
executed by Algorithms~1 and~2 on the three test cases}
\label{tab:Results}
\begin{center}
{\renewcommand{\arraystretch}{1.2}
\vspace*{-2mm} \begin{tabular}{|c|c|c|c|c|c|c|} \hline
\multicolumn{1}{|l|}{\multirow{2}{*}{\textbf{Problem}}} &
\multicolumn{2}{c|}{\textbf{Safe Expansion}} &
\multicolumn{2}{c|}{\textbf{Global Maximization}} &
\multicolumn{2}{c|}{\textbf{Total}}
\\
\cline{2-7} \multicolumn{1}{|l|}{} & \textbf{ Points} &
\textbf{Evaluations} & \textbf{ Points} & \textbf{Evaluations} &
\textbf{ Points} & \textbf{Evaluations} \\ \hline
%
%
1  & 15  & 41   & 20  & 31  & 35  & 72 \\
2  & 10  & 43   & 82  & 178 & 92  & 221 \\
3  & 16  & 44   & 10  & 19  & 26  & 63 \\
4  & 40  & 95   & 39  & 99  & 79  & 194 \\ \hline
5  & 52  & 191  & 29  & 33  & 81  & 224 \\
6  & 59  & 92   & 27  & 30  & 86  & 122 \\
7  & 150 & 209  & 128 & 209 & 278 & 418 \\
8  & 53  & 245  & 68  & 169 & 121 & 414 \\
9  & 380 & 1002 & 393 & 403 & 773 & 1495 \\
10 & 71  & 95   & 35  & 44  & 106 & 139 \\ \hline
11 & 210 & 485  & 170 & 196 & 380 & 681 \\
12 & 57  & 184  & 57  & 97  & 114 & 281 \\
13 & 99  & 441  & 55  & 74  & 154 & 515 \\
14 & 101 & 441  & 35  & 61  & 136 & 502 \\
15 & 93  & 255  & 89  & 108 & 182 & 363 \\
16 & 54  & 136  & 46  & 81  & 100 & 217 \\
17 & 59  & 249  & 74  & 169 & 133 & 418 \\
18 & 27  & 191  & 29  & 35  & 56  & 226 \\ \hline
\end{tabular}
}
\end{center}
}
\end{table}

\section{Conclusions} \label{sec:conclusions} The problem of safe
global maximization of an expensive   black-box function $f(x)$
satisfying the Lipschitz condition has been considered in this
paper. The word \emph{expensive} means here that each evaluation of
the objective function $f(x)$ is a time consuming operation.  With
respect to the traditional formulations used in global optimization
the problem under consideration here has two important distinctions:
  \begin{enumerate}
 \item[i.]
 The first difficulty consists in the    presence of the noise. As
 a result, instead of the function $f(x)$ a function $g(x)$
 distorted by the noise is optimized.
 \item[ii.]
The second difficulty is related to the notion of safe optimization.
The term ``safe" means that the objective function $g(x)$ during
optimization should not violate a ``safety" threshold, that in our
case is a certain a priori given value $h$ in the maximization
problem. Any new function evaluation must be performed at ``safe
points" only, namely, at points $y$ for which it is known that the
objective function $g(y) \geq h$. Clearly, the main difficulty here
consists in the fact that the used optimization algorithm should
ensure that the safety constraint will be satisfied at a point $y$
\textbf{\emph{before}} evaluation of $f(y)$ will be executed.
 \end{enumerate}

In our approach, the problem under investigation has been split in
two parts (phases):
 \begin{enumerate}
   \item
During the first phase, it is required  to find an approximation
$\widehat{\Omega}$ of the safe region $\Omega$ within the search
domain $D$ by learning from the information received from
evaluations of $g(x)$.
   \item
Then, during the second phase, it is  necessary to find an
approximation of the global maximum of $g(x)$ over
$\widehat{\Omega}$.
 \end{enumerate}

A theoretical study of the problem has been performed  and it has
been shown that even though the objective function $f(x)$ satisfies
the Lipschitz condition, traditional Lipschitz minorants and
majorants cannot be used in this context due to the presence of the
noise. In fact, a counterexample showing that the usage of simple
Lipschitz ideas can lead to erroneous evaluations of $g(x)$ at
unsafe points has been presented.

Then, a $\delta$-Lipschitz framework has been introduced and an
algorithm determining the safe area within the search domain $D$ has
been proposed. It has been theoretically proven that this method is
able to construct a minorant   ensuring  that all   executed
evaluations of $g(x)$ performed by the method will be safe. It has
been shown that the introduced procedure allows one to expand
reliably  the safe region from initial safe points and to obtain the
desired approximation $\widehat{\Omega}$ of the safe
region\,$\Omega$.

After that the second algorithm executing   the safe global
maximization over the found safe region $\widehat{\Omega}$ has been
proposed.  It has been theoretically investigated and conditions
allowing one to make conclusions with respect to not only the noisy
function $g(x)$ but also regarding the original function $f(x)$ have
been established. It should be stressed that the introduced
algorithm is able not only to provide lower and upper bounds for the
global maxima of $g(x)$ and $f(x)$ but to restrict significantly the
area of a possible location of the respective global maximizers, as
well.

Finally, numerical experiments executed on a series of problems
showing the reliability of the proposed procedures have been
performed.

\section*{Compliance with Ethical Standards}

\textbf{Conflict of interest:} The authors declare that they have no
conflict of interest.

\vspace*{2mm}

\noindent \textbf{Ethical approval:} This article does not contain
any studies with human participants or animals performed by any of
the authors.


\begin{figure}[!hp]
    \centering
    \subfloat[Test Function 1, Phase 1]{\label{fig:t1f11}\includegraphics[width=0.50\textwidth]{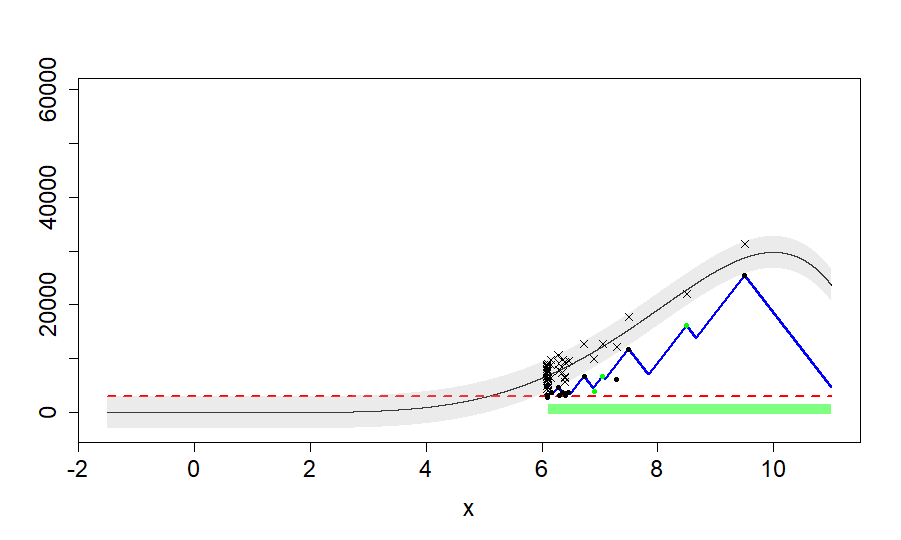}}
    \subfloat[Test Function 1, Phase 2]{\label{fig:t1f12}\includegraphics[width=0.50\textwidth]{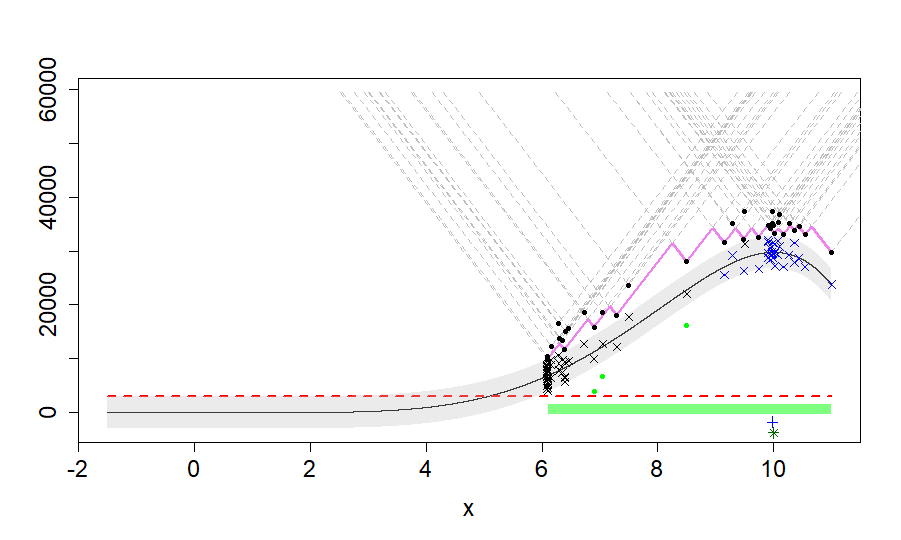}}\\
    \subfloat[Test Function 2, Phase 1]{\label{fig:t1f21}\includegraphics[width=0.50\textwidth]{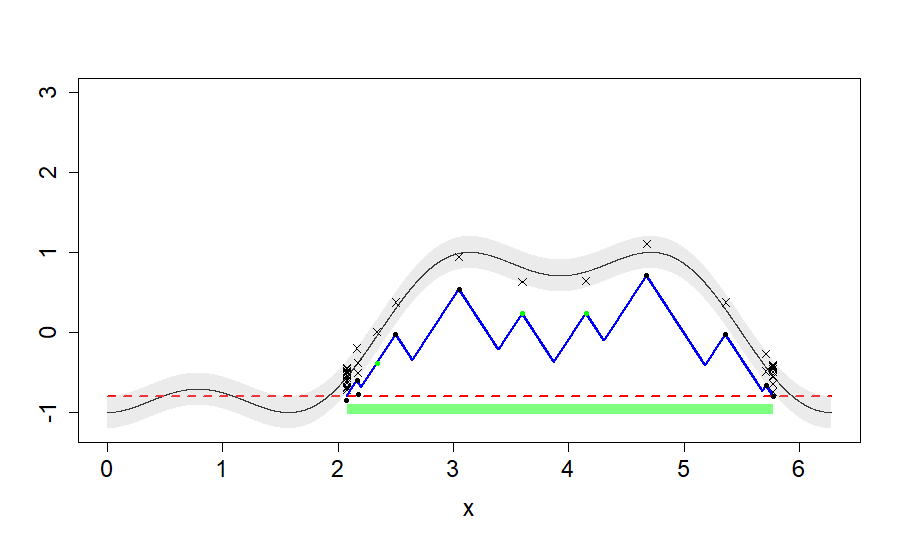}}
    \subfloat[Test Function 2, Phase 2]{\label{fig:t1f22}\includegraphics[width=0.50\textwidth]{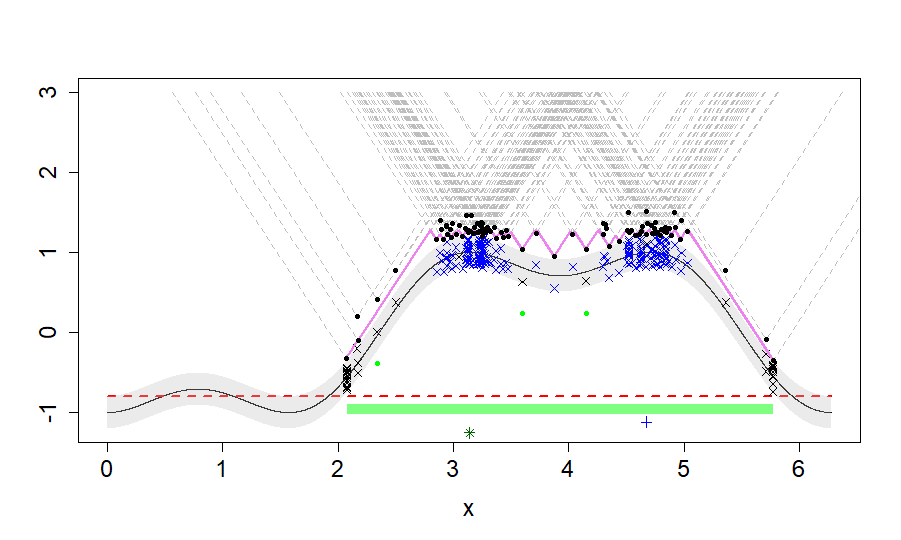}}\\
    \subfloat[Test Function 3, Phase 1]{\label{fig:t1f31}\includegraphics[width=0.50\textwidth]{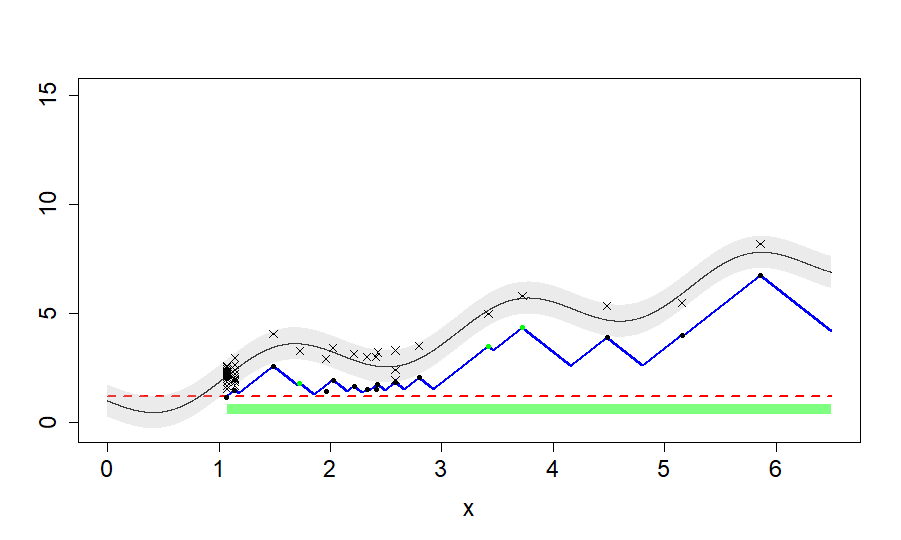}}
    \subfloat[Test Function 3, Phase 2]{\label{fig:t1f32}\includegraphics[width=0.50\textwidth]{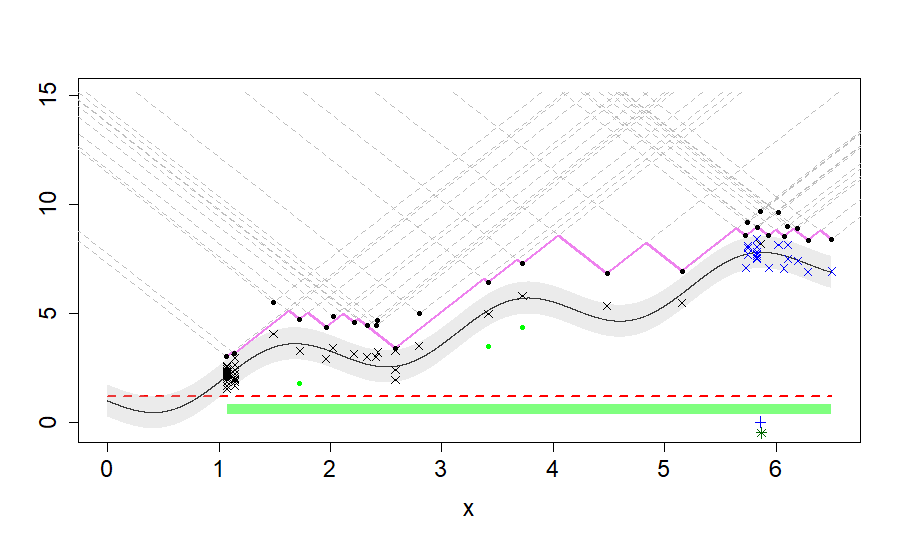}}\\
    \subfloat[Test Function 4, Phase 1]{\label{fig:t1f41}\includegraphics[width=0.50\textwidth]{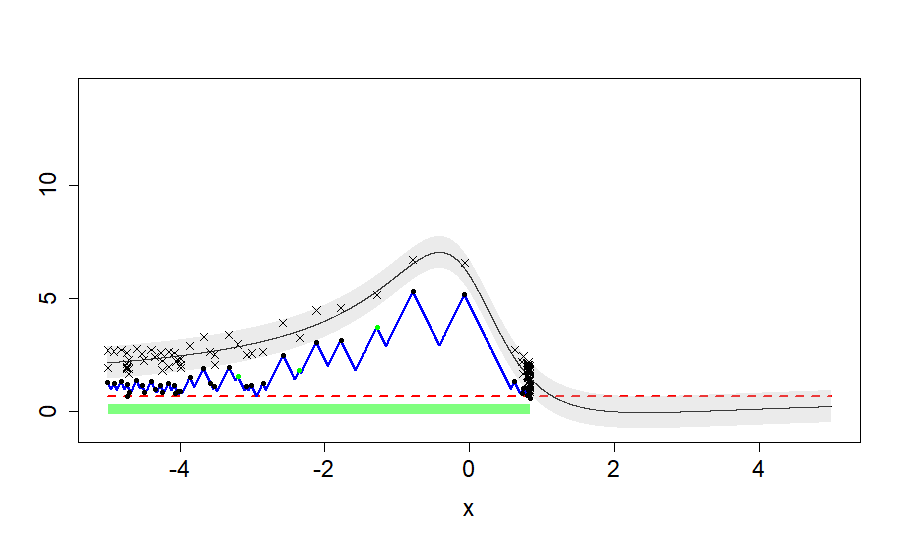}}
    \subfloat[Test Function 4, Phase 2]{\label{fig:t1f42}\includegraphics[width=0.50\textwidth]{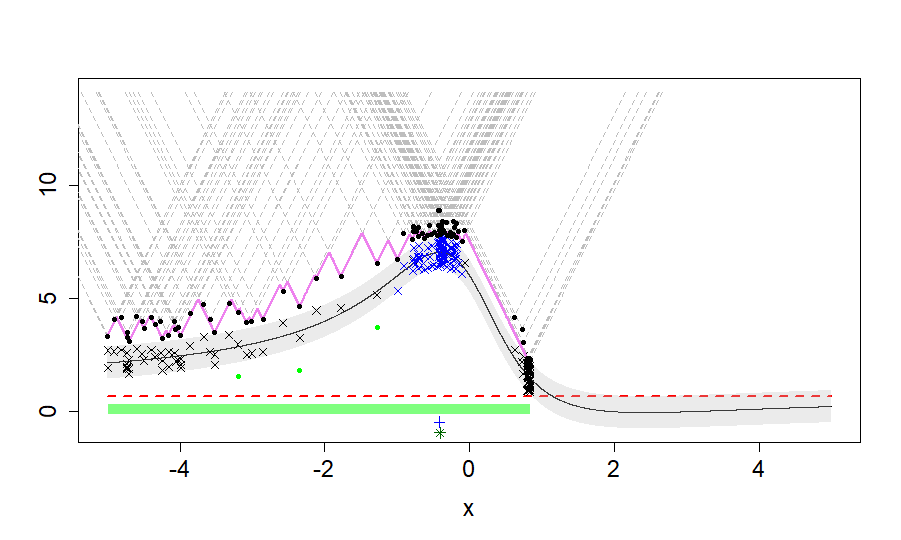}}\\
\caption{Graphical results of the $\delta$-Lipschitz framework on
Test Case 1} \label{fig:TestCase1Part1}
\end{figure}

\newpage

\begin{figure}[!hp]
    \centering
    \subfloat[Test Function 5, Phase 1]{\label{fig:t2f51}\includegraphics[width=0.50\textwidth]{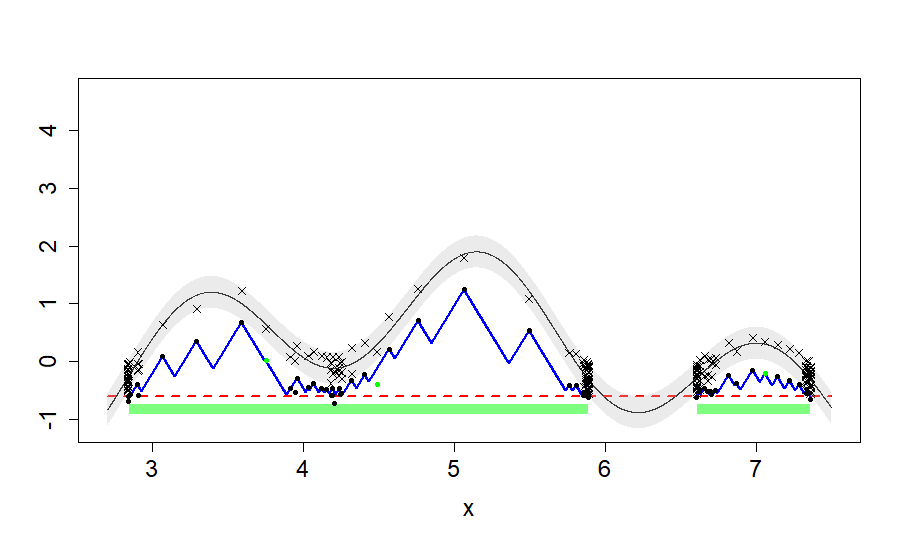}}
    \subfloat[Test Function 5, Phase 2]{\label{fig:t2f52}\includegraphics[width=0.50\textwidth]{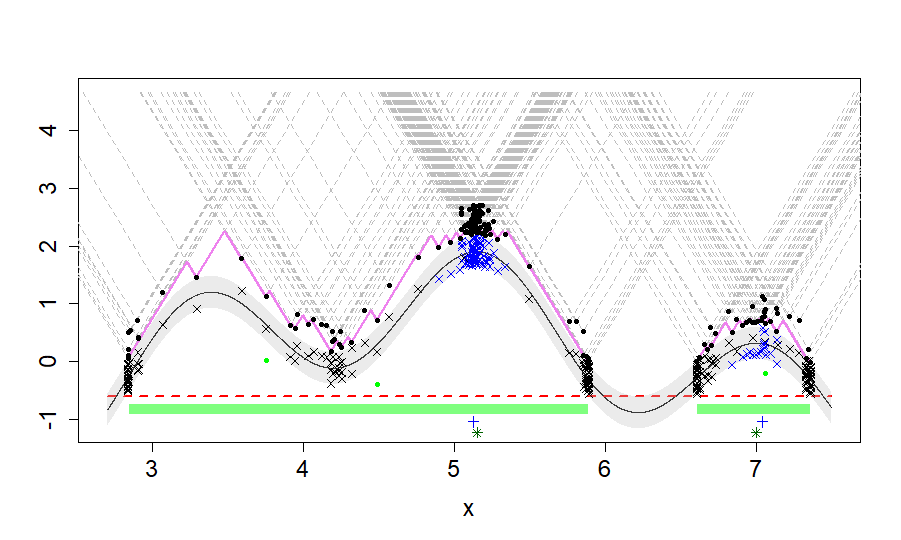}}\\
    \subfloat[Test Function 6, Phase 1]{\label{fig:t2f61}\includegraphics[width=0.50\textwidth]{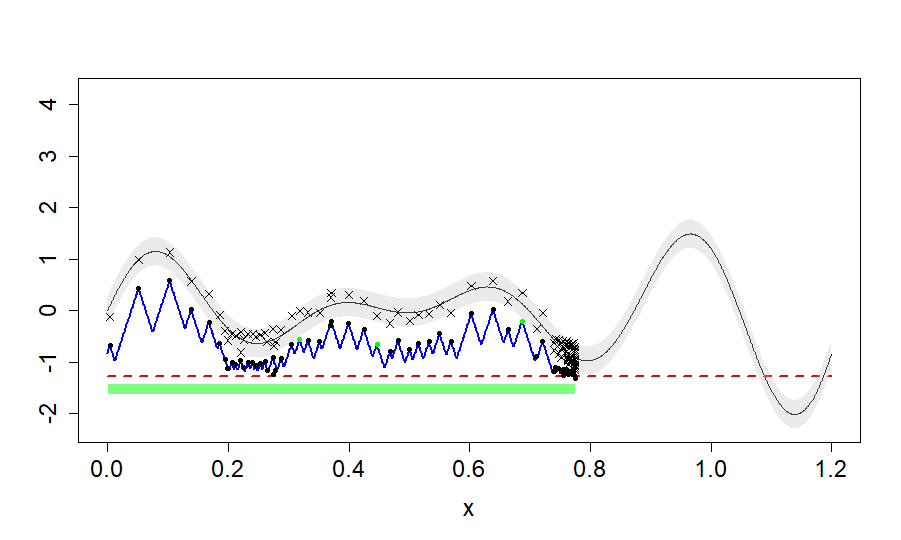}}
    \subfloat[Test Function 6, Phase 2]{\label{fig:t2f62}\includegraphics[width=0.50\textwidth]{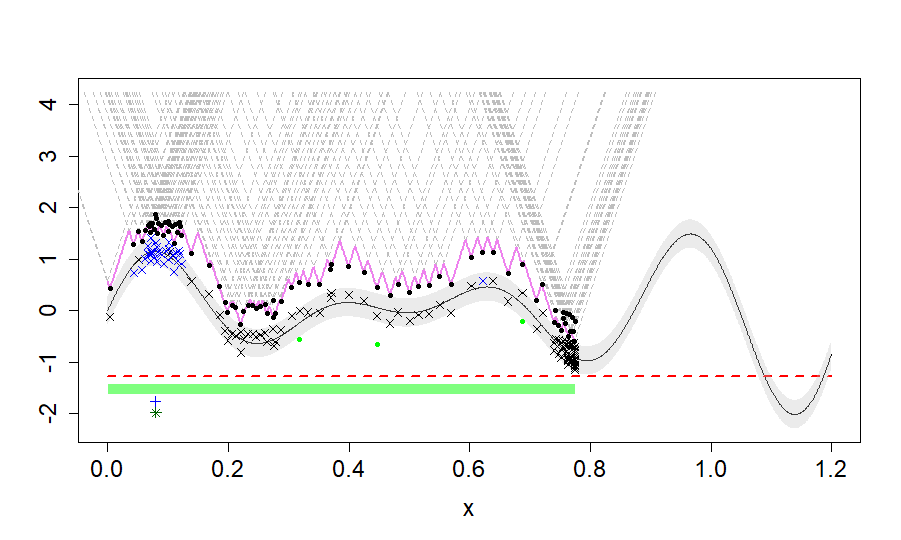}}\\
    \subfloat[Test Function 7, Phase 1]{\label{fig:t2f71}\includegraphics[width=0.50\textwidth]{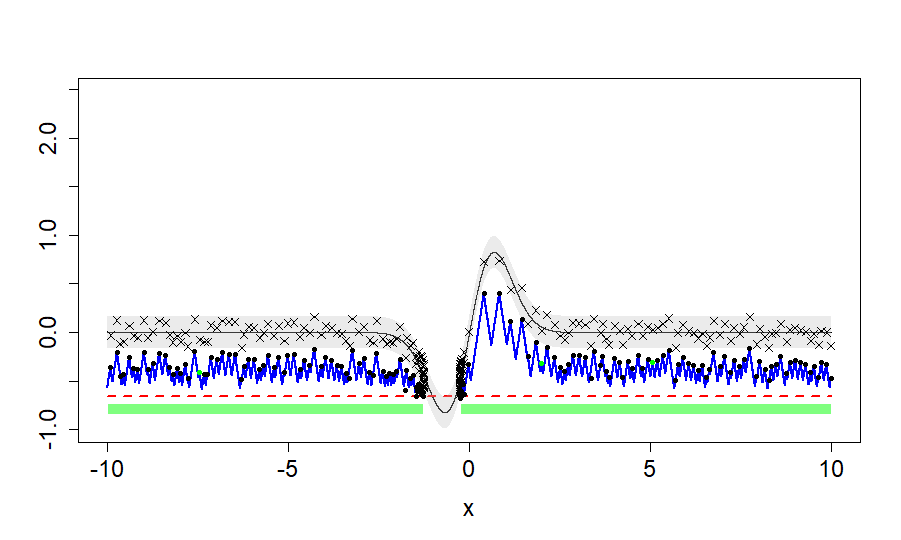}}
    \subfloat[Test Function 7, Phase 2]{\label{fig:t2f72}\includegraphics[width=0.50\textwidth]{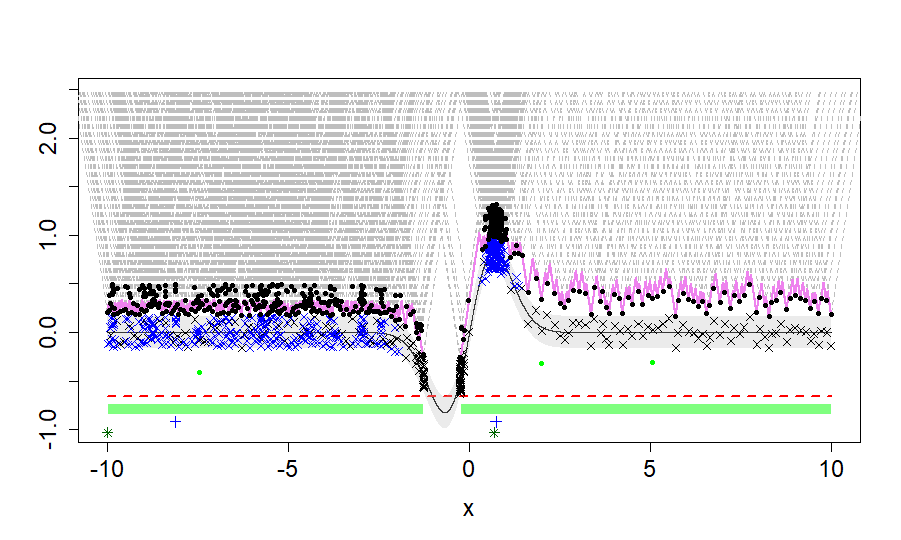}}\\
    \subfloat[Test Function 8, Phase 1]{\label{fig:t2f81}\includegraphics[width=0.50\textwidth]{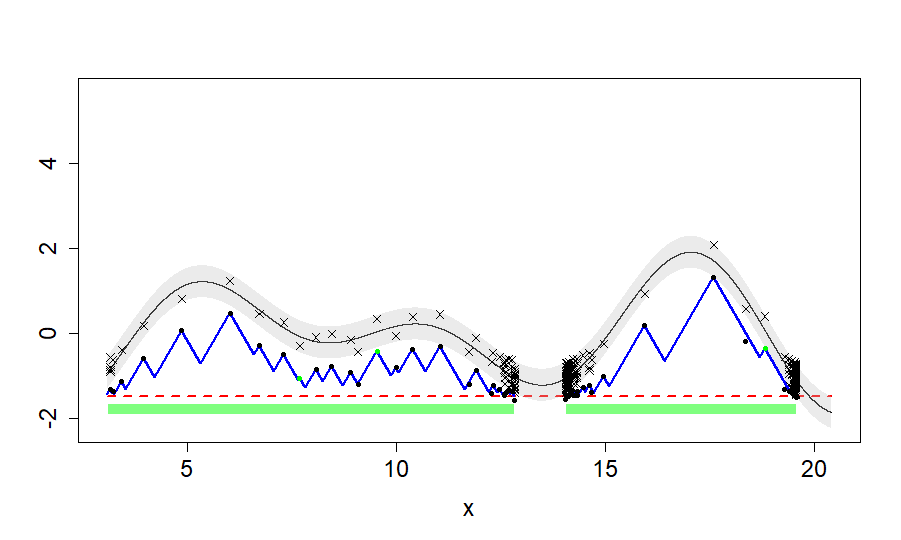}}
    \subfloat[Test Function 8, Phase 2]{\label{fig:t2f82}\includegraphics[width=0.50\textwidth]{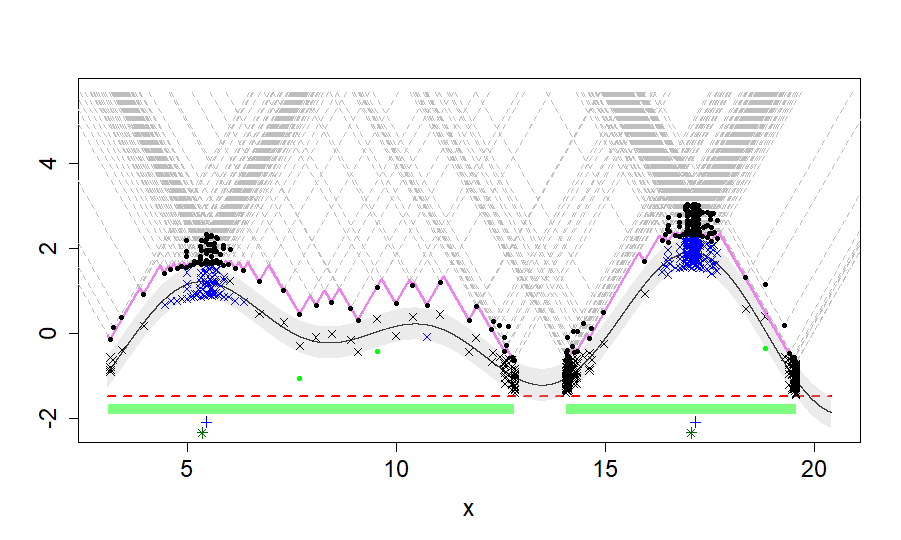}}\\
\caption{Graphical results of the $\delta$-Lipschitz framework on
Test Case 2, Part 1} \label{fig:TestCase2Part1}
\end{figure}

\newpage

\begin{figure}[!hp]
    \centering

    \subfloat[Test Function 9, Phase 1]{\label{fig:t2f91}\includegraphics[width=0.50\textwidth]{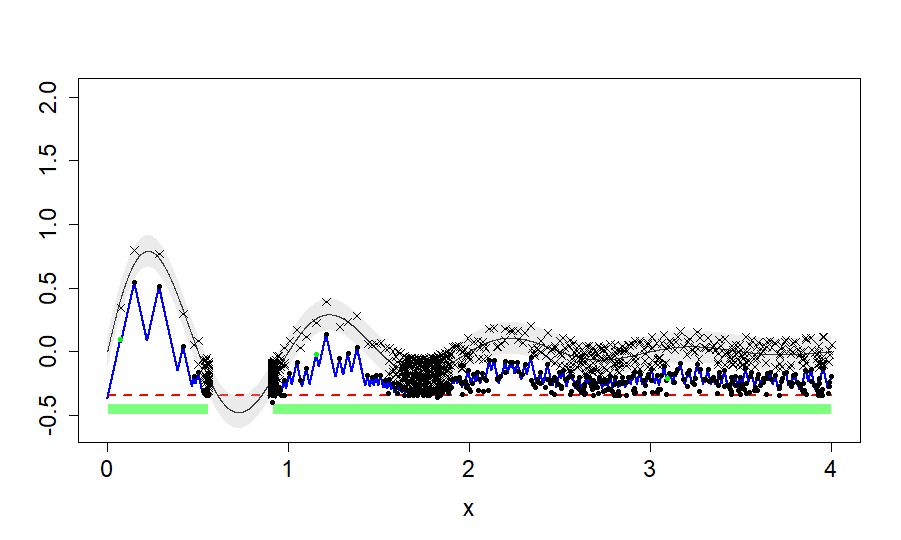}}
    \subfloat[Test Function 9, Phase 2]{\label{fig:t2f92}\includegraphics[width=0.50\textwidth]{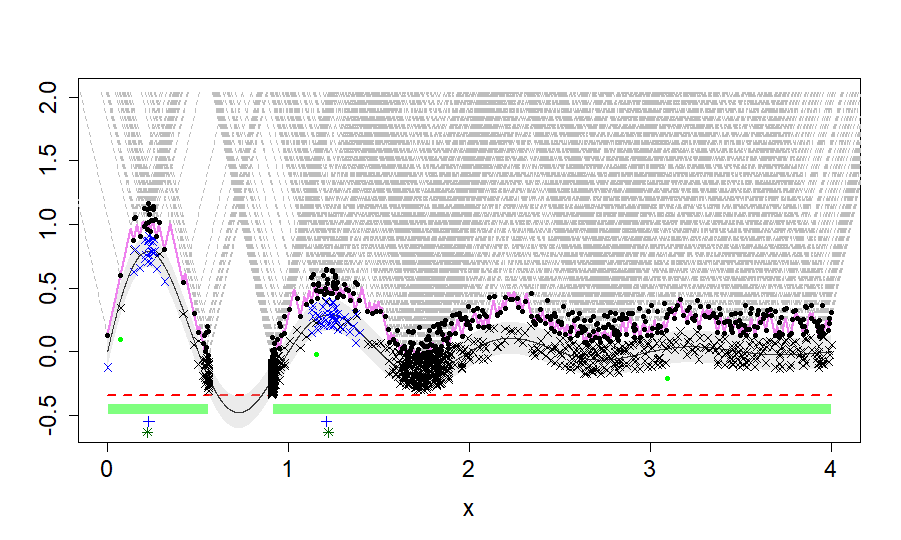}}\\
    \subfloat[Test Function 10, Phase 1]{\label{fig:t2f101}\includegraphics[width=0.50\textwidth]{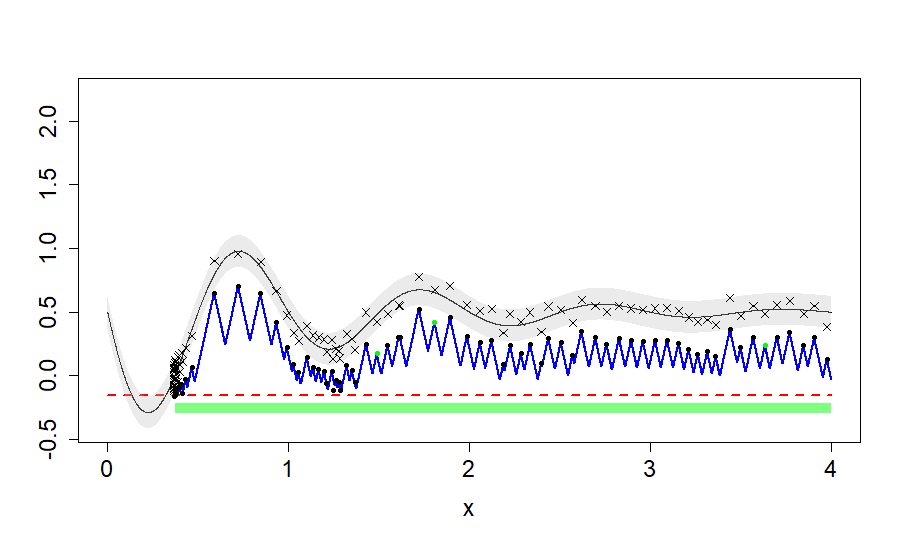}}
    \subfloat[Test Function 10, Phase 2]{\label{fig:t2f102}\includegraphics[width=0.50\textwidth]{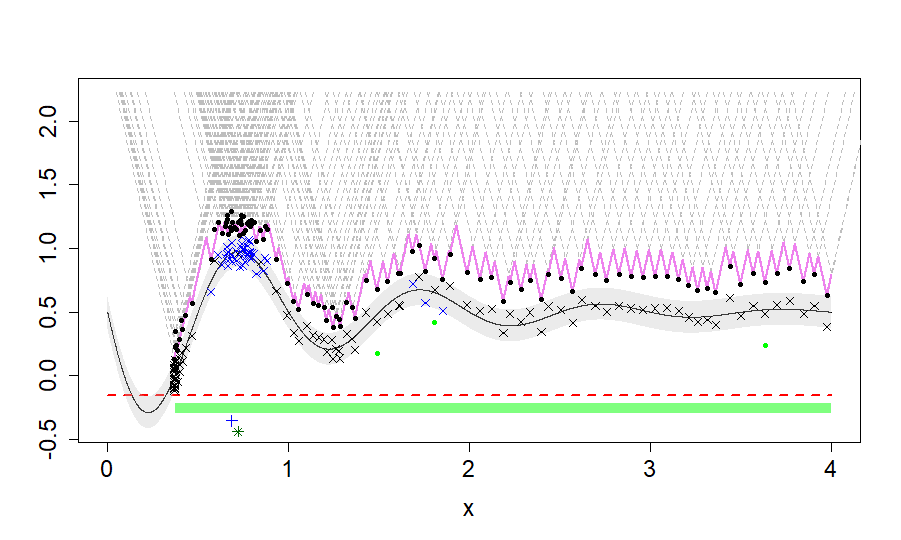}}\\
\caption{Graphical results of the $\delta$-Lipschitz framework on
Test Case 2, Part 2} \label{fig:TestCase2Part2}
\end{figure}

\newpage

\begin{figure}[!hp]
    \centering
    \subfloat[Test Function 11, Phase 1]{\label{fig:t3f111}\includegraphics[width=0.50\textwidth]{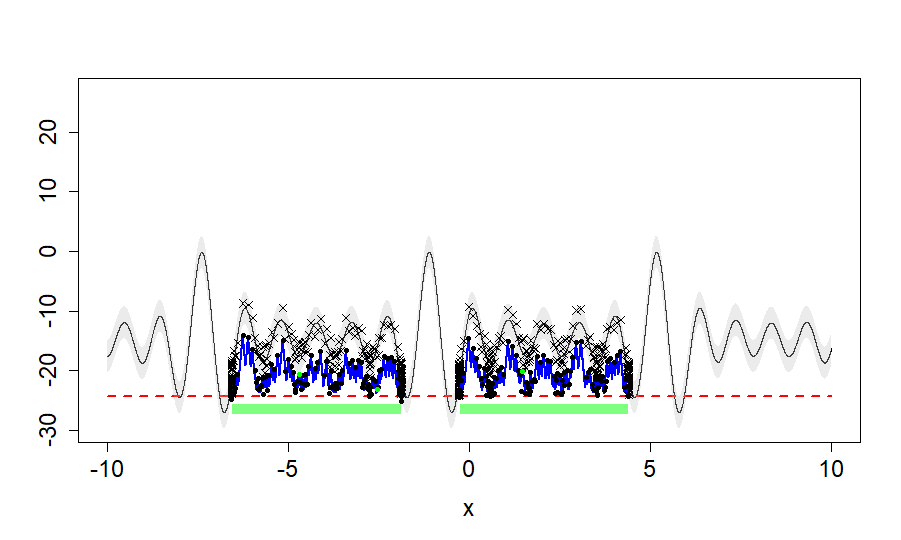}}
    \subfloat[Test Function 11, Phase 2]{\label{fig:t3f112}\includegraphics[width=0.50\textwidth]{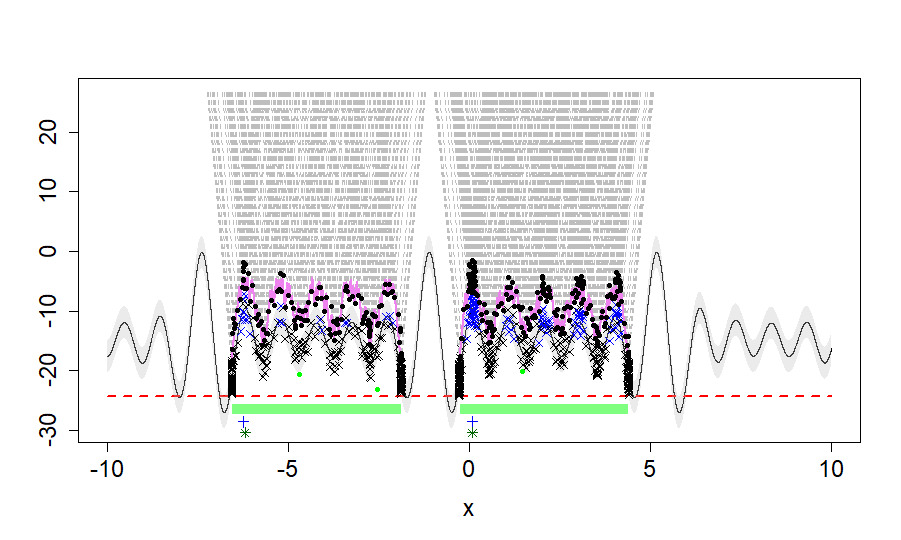}}\\
    \subfloat[Test Function 12, Phase 1]{\label{fig:t3f121}\includegraphics[width=0.50\textwidth]{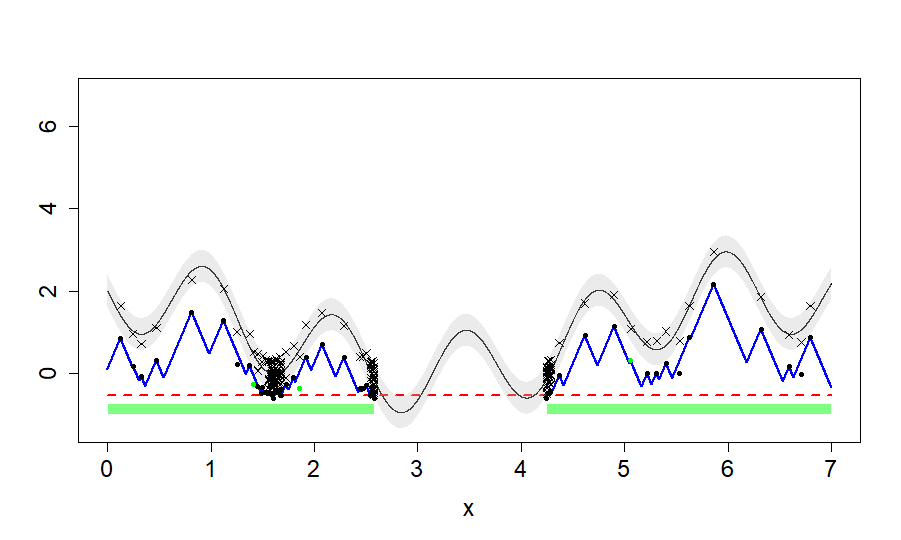}}
    \subfloat[Test Function 12, Phase 2]{\label{fig:t3f122}\includegraphics[width=0.50\textwidth]{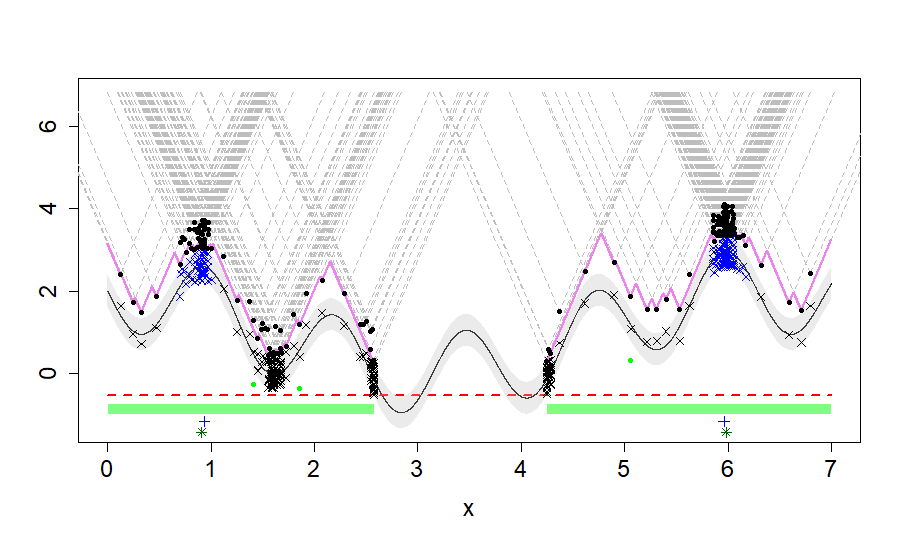}}\\
    \subfloat[Test Function 13, Phase 1]{\label{fig:t3f131}\includegraphics[width=0.50\textwidth]{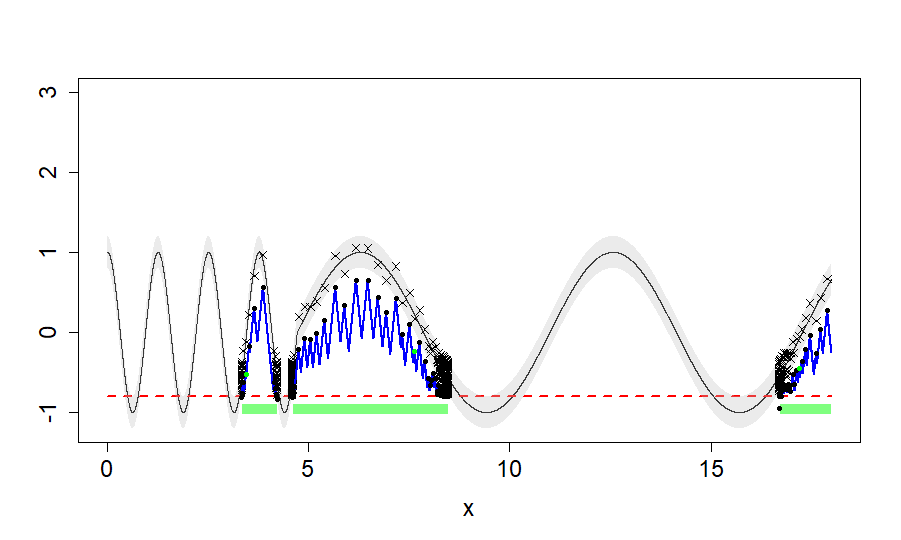}}
    \subfloat[Test Function 13, Phase 2]{\label{fig:t3f132}\includegraphics[width=0.50\textwidth]{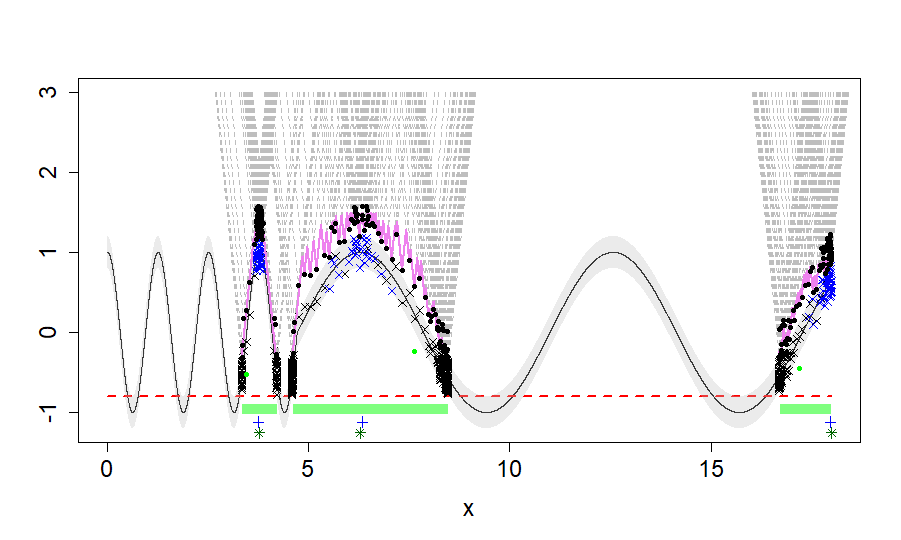}}\\
    \subfloat[Test Function 14, Phase 1]{\label{fig:t3f141}\includegraphics[width=0.50\textwidth]{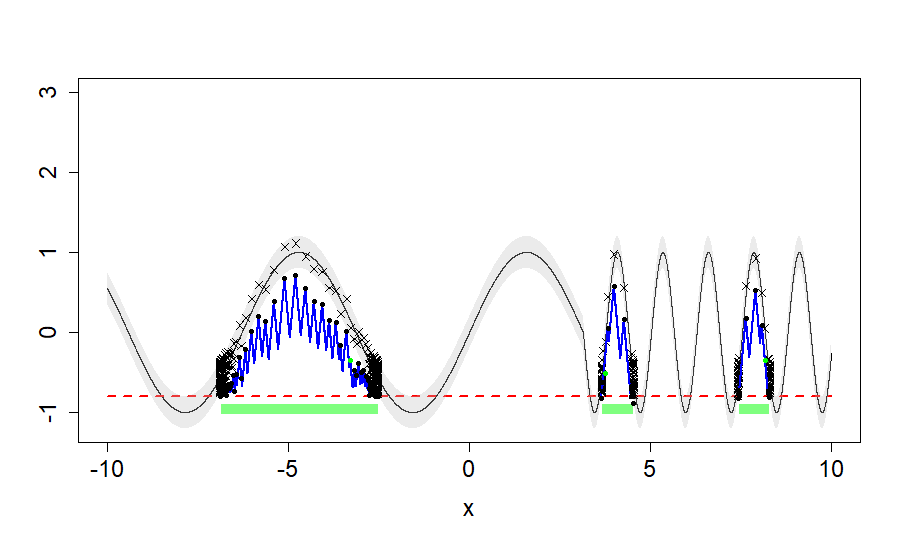}}
    \subfloat[Test Function 14, Phase 2]{\label{fig:t3f142}\includegraphics[width=0.50\textwidth]{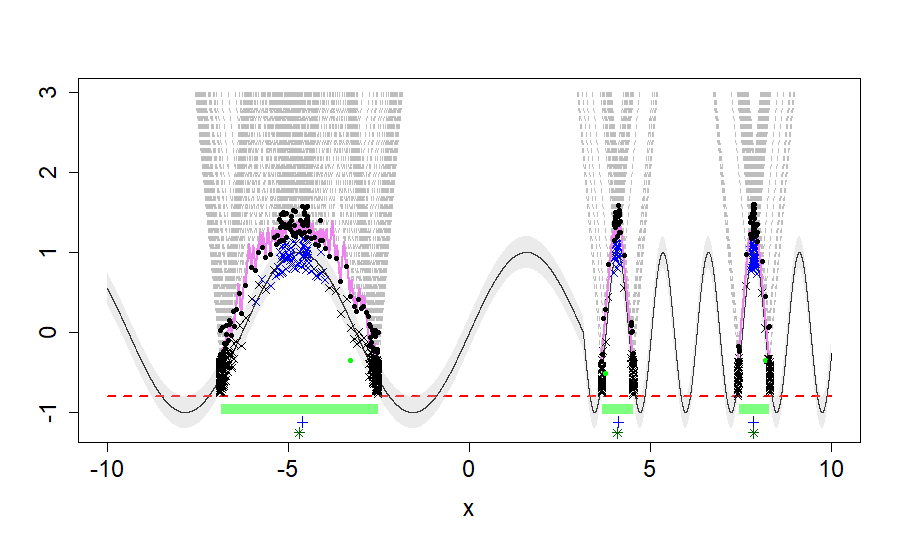}}\\
\caption{Graphical results of the $\delta$-Lipschitz framework on
Test Case 3, Part 1} \label{fig:TestCase3Part1}
\end{figure}

\newpage

\begin{figure}[!hp]
    \centering
    \subfloat[Test Function 15, Phase 1]{\label{fig:t3f151}\includegraphics[width=0.50\textwidth]{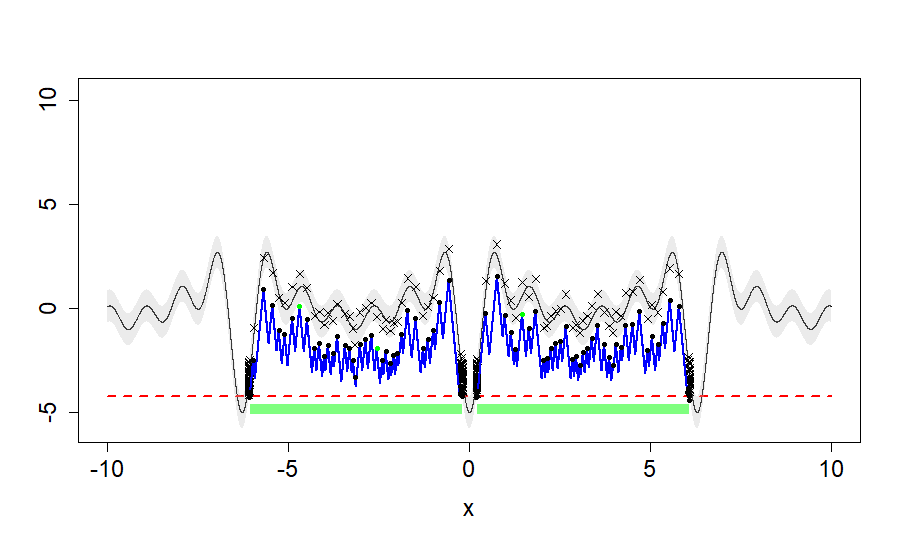}}
    \subfloat[Test Function 15, Phase 2]{\label{fig:t3f152}\includegraphics[width=0.50\textwidth]{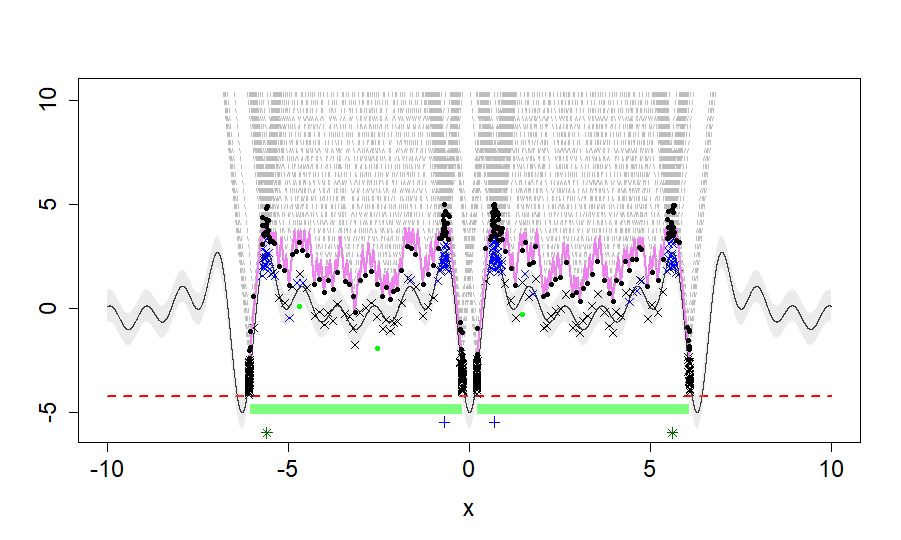}}\\
    \subfloat[Test Function 16, Phase 1]{\label{fig:t3f161}\includegraphics[width=0.50\textwidth]{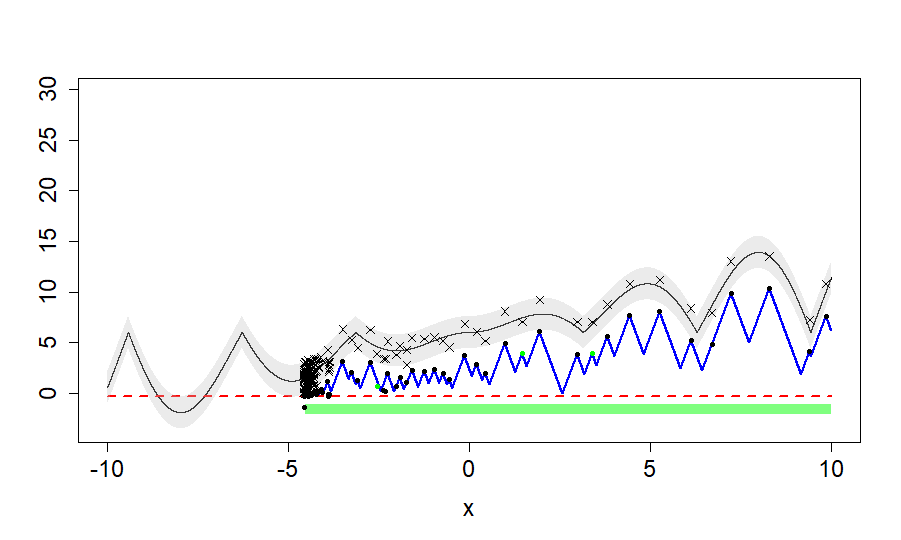}}
    \subfloat[Test Function 16, Phase 2]{\label{fig:t3f162}\includegraphics[width=0.50\textwidth]{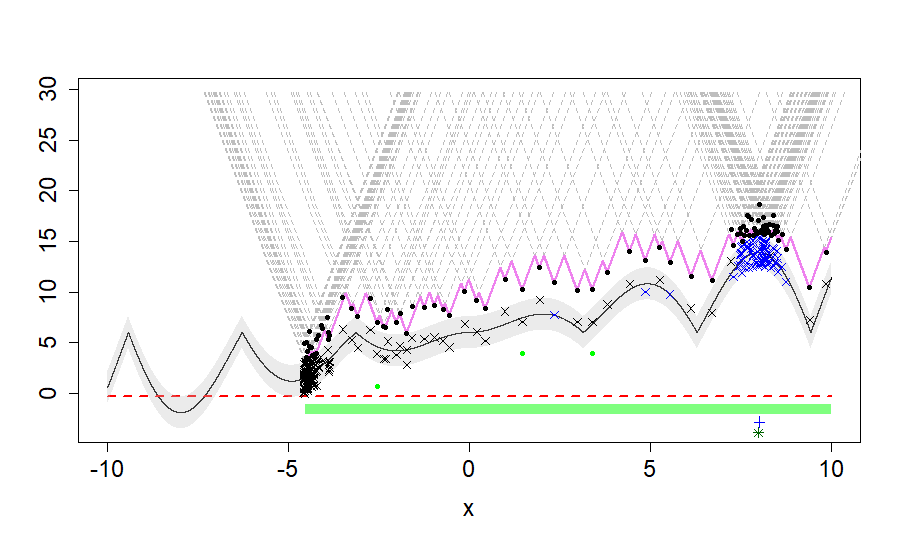}}\\
    \subfloat[Test Function 17, Phase 1]{\label{fig:t3f171}\includegraphics[width=0.50\textwidth]{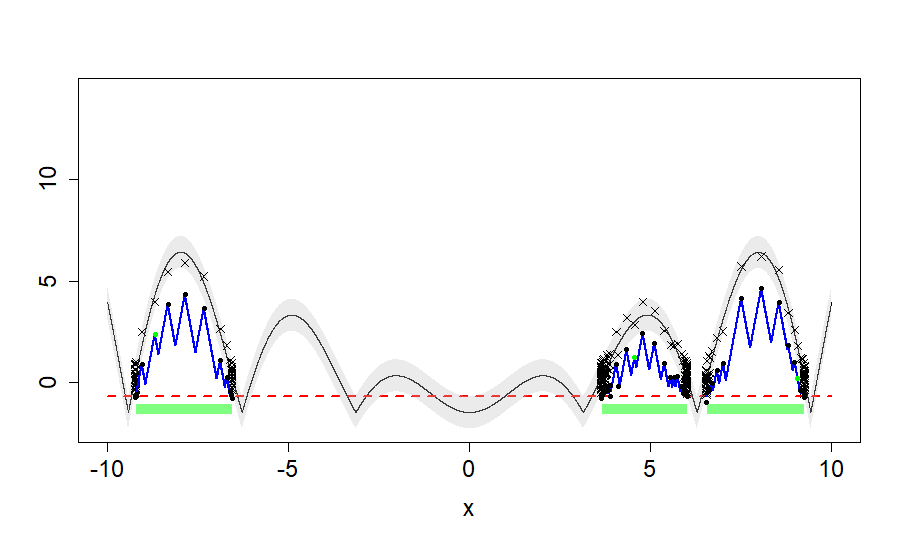}}
    \subfloat[Test Function 17, Phase 2]{\label{fig:t3f172}\includegraphics[width=0.50\textwidth]{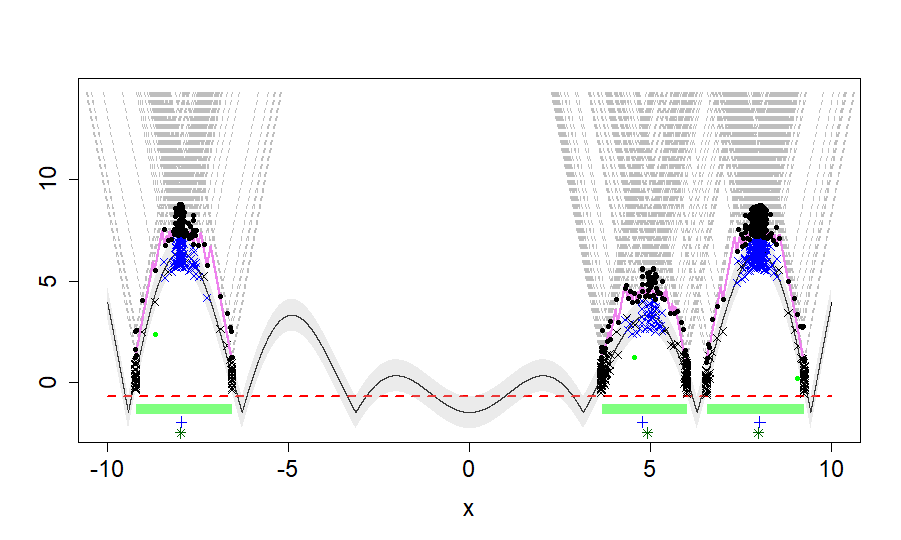}}\\
    \subfloat[Test Function 18, Phase 1]{\label{fig:t3f181}\includegraphics[width=0.50\textwidth]{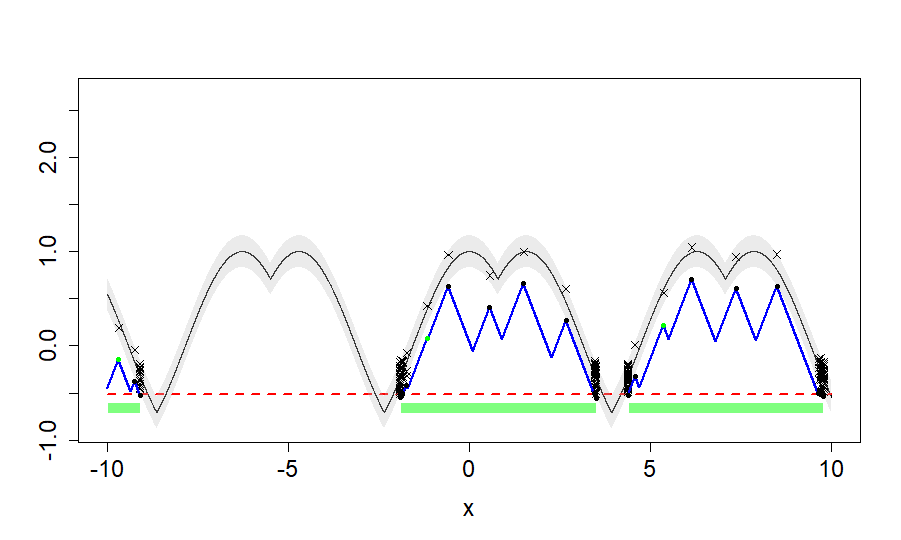}}
    \subfloat[Test Function 18, Phase 2]{\label{fig:t3f182}\includegraphics[width=0.50\textwidth]{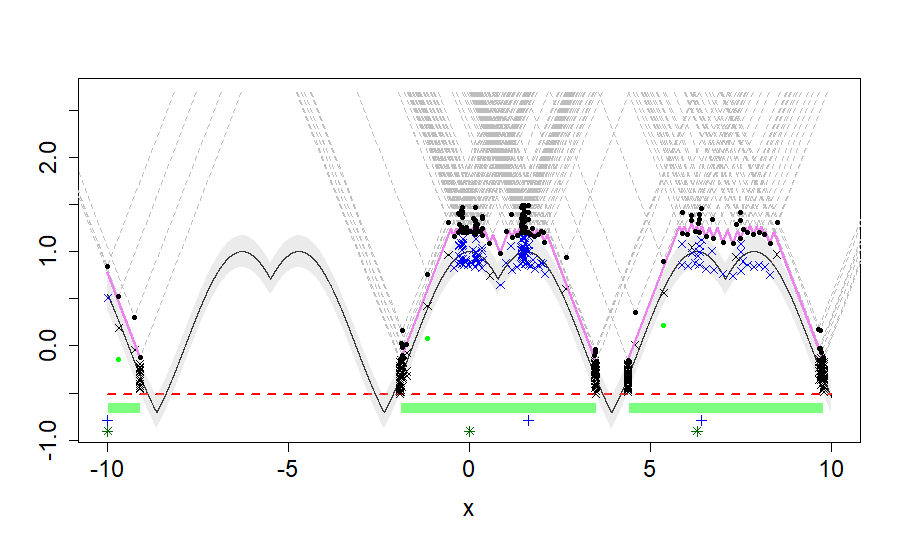}}\\
\caption{Graphical results of the $\delta$-Lipschitz framework on
Test Case 3, Part 2} \label{fig:TestCase3Part2}
\end{figure}

\newpage

\bibliographystyle{spmpsci}
\bibliography{Xbib_Safe}

\appendix
\begin{algorithm}[t]
\caption{A reliable expansion of the safe region}
\label{alg:safe_expansion}
\begin{algorithmic}[1]
\State{INPUT:}
\State{$D=[a,b]$ search space}
\State{$L$ Lipschitz constant}
\State{$\delta$ maximal quantity of noise: $|\xi(x)|\leq\delta$}
\State{$h$ safety threshold}
\State{$S_{i=1:n}^0$ the set of $n$ initial safe points, with $S_i^k=[\alpha_i^k,\beta_i^k]$, where $\alpha_i=\beta_i=\bar{x_i}$. Safe expansion starts from every initial safe point leading to the identification of an equal or lower number of safe regions}
\State{$r_{\alpha_i}, r_{\beta_i}$ the sets of noisy function values observed at $\alpha_i$ and $\beta_i$, respectively, namely \emph{repetitions}}
\State{$\varepsilon$ minimal expansion (if the expansion induced by the new function evaluation is lower than $\varepsilon$, then the function evaluation is considered as a \emph{repetition} at the current point)}
\State{$\nu$ maximum number of repetitions (stop criterion for expansion - depends on $\varepsilon$})
\State{$\sigma$ tolerance (stop criterion for expansion)}

\State{OUTPUT:}
\State{$\Hat{\Omega} = {\cup}_j \hat{\Omega}_j$
approximation of the safe region $\Omega$}
\State{$P$ the set of
trial points (for the Global Maximization (Algorithm 2))}

\State{INITIALIZATION:}
\State{$A^k,B^k=\emptyset$, where: $A^k=\{i=1,..,n: \alpha_i^k\}$ cannot be expanded any longer and $B^k=\{i=1,..,n: \beta_i^k\}$ cannot be expanded any longer}
\State{$k\leftarrow 0$ (current iteration)}
\State{$r_{\alpha_i} \leftarrow r_{\beta_i} \leftarrow \emptyset$}
\State{$P \leftarrow S_{1:n}^0$}
\State{}

\For{$i=1:n$}
    \State{$\#$ \emph{merging overlapping safe regions} }
    \If{$\exists S_j^k: \alpha_i^k<\beta_j^k, j=1,...,n, j\neq i$}
        \State{$A^k \leftarrow A^k \cup \{i\}$}
    \EndIf
    \If{$\exists S_j^k: \beta_i^k>\alpha_j^k, j=1,...,n, j\neq i$}
        \State{$B^k \leftarrow B^k \cup \{i\}$}
    \EndIf

    \State{$\#$ \emph{safe expansion towards extreme a}}
    \If{$g(\alpha_i^k)-2\delta >h$}
        \State{$\alpha_i^{k+1} \leftarrow \alpha_i^k - \frac{|g(\alpha_i^k)-2\delta-h|}{L}$}
        \If{$|\alpha_i^{k+1}-\alpha_i^k|<\varepsilon$}
            \State{$r_{\alpha_i}\leftarrow r_{\alpha_i} \cup \{g(\alpha_i^k)\}$}
        \Else
            \State{$r_{\alpha_i} \leftarrow \emptyset$}
            \State{$P \leftarrow P \cup max\{a,\alpha_i^{k+1}\}$}
        \EndIf
    \algstore{savename}
\end{algorithmic}
\end{algorithm}

\begin{algorithm}
\begin{algorithmic}[1]
    \algrestore{savename}
    \Else
        \State{$\alpha_i^{k+1} \leftarrow \alpha_i^k$}
        \State{$r_{\alpha_i} \leftarrow r_{\alpha_i} \cup \{g(\alpha_i^k)\}$}
    \EndIf
    \If{$|r_{\alpha_i}|==\nu \lor max_i\{r_{\alpha_i}\}-min_i\{r_{\alpha_i}\}>2\delta - \sigma \lor \alpha_i^k<a$}
        \State{$A^{k+1} \leftarrow A^k \cup \{i\} $}
    \EndIf

    \State{$\#$ \emph{safe expansion towards extreme b}}
    \If{$g(\beta_i^k)-2\delta >h$}
        \State{$\beta_i^{k+1} \leftarrow \beta_i^k + \frac{|g(\beta_i^k)-2\delta-h|}{L}$}
        \If{$|\beta_i^{k+1}-\beta_i^k|<\varepsilon$}
            \State{$r_{\beta_i}\leftarrow r_{\beta_i} \cup \{g(\beta_i^k)\}$}
        \Else
            \State{$r_{\beta_i} \leftarrow \emptyset$}
            \State{$P \leftarrow P \cup min\{b,\beta_i^{k+1}\} $}
        \EndIf
    \Else
        \State{$\beta_i^{k+1} \leftarrow \beta_i^k$}
        \State{$r_{\beta_i} \leftarrow r_{\beta_i} \cup \{g(\beta_i^k)\}$}
    \EndIf
    \If{$|r_{\beta_i}|==\nu \lor max_i\{r_{\beta_i}\}-min_i\{r_{\beta_i}\}>2\delta - \sigma \lor \beta_i^k<a$}
        \State{$B^{k+1} \leftarrow B^k \cup \{i\} $}
    \EndIf
\EndFor
\If{$|A^k|\neq n \lor |B^k|\neq n$}
    \State{$k \leftarrow k+1$}
    \State{(GO BACK TO THE For LOOP (line 20))}
\Else
    \State{$\#$ \emph{finally, merge possible overlapping safe regions}}
    \State{$S_i \leftarrow S_i^k$}
    \State{$\bar{A}=\{\alpha_1\} \cup \{\alpha_l : \alpha_l > \beta_{l-1}, l=2,...,n\}$}
    \State{$\bar{B}=\{\beta_n\} \cup \{\beta_l : \beta_l < \alpha_{l+1}, l=n-1,...,1\}$}
    \State{$\hat{\Omega}_j=[\alpha_j \in \bar{A}, \beta_j \in \bar{B}]$}
\EndIf
\Statex{}
\Return
$\hat{\Omega}={\cup}_j \hat{\Omega}_j$ where $\hat{\Omega}$ is the found approximation of the safe region $\Omega$, which can consist of several disjoint safe subregions.
$P$, the set of trial points
\end{algorithmic}
\end{algorithm}

\begin{algorithm} [t]
\caption{Global Maximization}
\label{alg:global_maximization}
\begin{algorithmic}[1]
\State{INPUT:}
\State{$L$ Lipschitz constant}
\State{$\delta$ the maximal quantity of noise (as in Algorithm 1), see (3)}
\State{$\varepsilon$ accuracy of the global maximization (as in Algorithm 1)}
\State{$\nu$ the maximal number of allowed repetitions of evaluations of $g(x)$ (stopping criteria for expansion)}
\State{$P$, the set of safe trial points from Algorithm 1}
\State{$\Hat{\Omega} = {\cup}_j \hat{\Omega}_j$ the safe region found by Algorithm 1}
\State{$\Hat{\Omega}_j =[\alpha_j,\beta_j]$ the $j$-th disconnected safe subregion, with $j=1,\ldots,N_{\hat{\Omega}}$}
\State{FUNCTIONS:}
\State{$R_i$ characteristics of the $i$-th interval $[x_{i-1},x_i]$, with $i=2:k$, as defined in (27)}
\State{$\bar{x}_t$ as defined in (28),\,(30)}
\State{$\gamma(x,x_i)$, $i=1:k$, as defined in (21)}
\State{$\hat{g}(x_i)$ and $\check{g}(x_i),i=1:k$, as defined in (18)
and (19), respectively.}
    \State{} \For{$j = 1:N_{\hat{\Omega}}$}
    \State{$x_1,\ldots,x_k \in P : \forall q=1,\ldots,k \implies x_q \geq \alpha_j \land x_q \leq \beta_j$}
    \State{}
    \State{$\#$ \emph{Construction of $\Gamma_k(x)$ on points from Algorithm 1}}
    \For{$l=1:k$}
        \State{$z_l\leftarrow \gamma(x_l,x_1)$}
    \EndFor
    \For{$i=2:k$}
        \If{$\gamma(x_i,x_i)<z_i$}
            \For{$l=1:k$}
                \If{$\gamma(x_l,x_i)<z_l$}
                    \State{$z_l\leftarrow \gamma(x_l,x_i)$}
                \EndIf
            \EndFor
        \EndIf
    \EndFor
    \State{}

    \State{$\#$ \emph{Construction of $\Gamma_{k+1}(x)$ from $\Gamma_k(x)$} }
    \State{$R_{max}\leftarrow R_2$}
    \State{$t\leftarrow2$}
    \State{$\nu_{cur}\leftarrow 0$}
    \For{$i=3:k$}
        \State{$R_i \leftarrow 0.5(z_{i-1}+z_i)+0.5L(x_i-x_{i-1})$}
        \If{$(R_{max}<R_i)$}
            \State{$R_{max} \leftarrow R_i$}
            \State{$t \leftarrow i$}
        \EndIf
    \EndFor
            \algstore{safe_global_optimization}
\end{algorithmic}
\end{algorithm}

\begin{algorithm}
\begin{algorithmic}[1]
        \algrestore{safe_global_optimization}
    \If{$(x_t-x_{t-1}>\varepsilon)$}
        \State{$x^{k+1}\leftarrow\Bar{x}_t$}
        \State{$\nu_{cur}\leftarrow 1$}
        \State{$\Check{g}(x^{k+1})\leftarrow g(x^{k+1})$}
        \If{$\,\,(\Check{g}(x^{k+1})>R_{max})$}
            \If{$(\nu_{cur}+1=\nu)$}
                \State{\textbf{STOP} (The maximum allowed number $\nu$ of repetitions has been reached)}
            \Else
                \State{$\nu_{cur}\leftarrow \nu_{cur}+1$}
            \EndIf
            \State{\textbf{GOTO 46} (Re-evaluate $g(x^{k+1})$)}
        \Else
            \For{$i=k:t-1$}
                \State{$x_{i+1}\leftarrow x_i$}
                \State{$z_{i+1}\leftarrow z_i$}
            \EndFor
            \State{$x_t\leftarrow x^{k+1}$}
            \State{$z_t\leftarrow \Check{g}(x^{k+1})$}
            \For{$i=1:k+1$}
                \If{$(z_i>\gamma(x_i,x_t))$}
                    \State{$z_i \leftarrow \gamma(x_i,x_t)$}
                \EndIf
            \EndFor
            \State{$k\leftarrow k+1$}
            \State{\textbf{GOTO 18} (Continue with the next iteration within the same safe region $\Hat{\Omega}_j$)}
        \EndIf
    \Else
        \State{\textbf{STOP} (The required accuracy $\varepsilon$ has been reached)}
    \EndIf
\EndFor{ (Continue the search within the next safe region)}
 \Statex{}
 \Return
 the values $x^{*}_k$ and $g^{*}_k$; the majorant $\Gamma_k(x)$, the regions  $N^k_g$ and $N^k_f$
\end{algorithmic}
\end{algorithm}

\end{document}